\documentclass[12pt]{amsart}

\usepackage
{amsmath,amsfonts,latexsym,graphicx,amssymb,amsthm}
\usepackage{tikz}
\usetikzlibrary{calc}

\newcommand{\HP}{\mathbb{H}}

\newcommand{\Q}{\mathbb{Q}}
\newcommand{\R}{\mathbb{R}}
\newcommand{\Z}{\mathbb{Z}}

\newcommand{\arcosh}{\mathrm{arcosh}}
\newcommand{\Area}{\mathrm{Area}}

\newcommand{\Card}{\mathrm{Card\,}}

\renewcommand{\fill}{\mathrm{fill}}
\newcommand{\Fill}{\mathrm{Fill}}
\newcommand{\Gal}{{\mathrm {Gal}}}
\newcommand{\GL}{{\mathrm {GL}}}

\newcommand{\Isom}{\mathrm{Isom}}

\newcommand{\Ker}{\mathrm{Ker}}

\newcommand{\Mat}{\mathrm{Mat}}

\newcommand{\Min}{\mathrm{Min}}

\newcommand{\PGL}{{\mathrm {PGL}}}
\newcommand{\PSL}{{\mathrm {PSL}}}
\newcommand{\Quat}{{\mathrm {Quat}}}

\newcommand{\SL}{{\mathrm {SL}}}

\newcommand{\Sol}{{\mathrm {Sol}}}

\newcommand{\Syst}{{\mathrm {Syst}}}
\newcommand{\Tr}{{\mathrm {Tr}}}

\renewcommand{\mod}{\,\mathrm{mod}\,}

\newtheorem{lemma}{Lemma}
\newtheorem{thm}[lemma]{Theorem}
\newtheorem{Schmutz}[lemma]{Schmutz Theorem}
\newtheorem{AII}[lemma]{An-Irhinger-Irmer Theorem}
\newtheorem{cor}[lemma]{Corollary}

\newcommand{\cH}{\mathcal{H}}

\newcommand{\cP}{\mathcal{P}}
\newcommand{\cS}{\mathcal{S}}
\newcommand{\cT}{\mathcal{T}}

\author{Olivier Mathieu}

\address{CNRS, Institut Camille Jordan, Universit\'e de Lyon, France}

\address{SUSTech Insternational Center for Mathematics, Shenzhen, China}

\email{mathieu@math.univ-lyon1.fr}

\title{Filling Surfaces with very few systoles}

\date{}

\begin{document}

\begin{abstract} In the paper we describe hyperbolic surfaces filled by their systoles, where the total number of systoles is in $O(\frac{g}{\ln \,g})$, that is
equivalent  to the lower bound of Anderson, Parlier and Pittet
\cite{APP}. 
Various papers \cite{SS}\cite{FB20}\cite{Sanki}\cite{ IM}\cite{ Mathieu} have investigated the same question, and the best previously known upper bounds where in $o(\frac{g}{{\sqrt{\ln \,g}}})$. 

Surprizingly  the present approach is, in our opinion, much simpler than the 
methods of earlier papers.
\end{abstract}

\maketitle

\section*{Introduction}
We start with  definitions. For $g\geq 2$, let  $\cT_g$ be the Teichm\"uller space, that is the space of all marked closed oriented hyperbolic surfaces of genus $g$. 

 Let $C$ be a finite set of geodesics on some surface $\Sigma\in \cT_g$,
where by {\it geodesic} we mean a closed geodesic.
 We say that $C$ {\it fills} the surface if $C$ cuts $\Sigma$ into polygons.
We denote as $\Syst(\Sigma)$  the set of all
{\it systoles} of $\Sigma$, that is the geodesic of shortest length.
Since the spectrum length is discrete \cite{Buser},  $\Syst(\Sigma)$ is always finite.

A generic surface  $\Sigma\in \cT_g$ admits a unique systole,  
which does not fill. However, W. Thurston had shown that the subspace $\cP_g\subset \cT_g$ of all  surfaces $\Sigma$  which are filled by their systoles is nonempty \cite{T85}. Thus, we define two  {\it filling numbers}
$\Fill(g)\geq \fill(g)$, as follows:

\begin{enumerate}
\item[(a)] $\Fill(g)=\Min_{\Sigma\in \cP_g}\,\Card\,\Syst(\Sigma), \text{ and}$

\item[(b)]$\fill(g)$ is the smallest integer $n$ for which there is 
a hyperbolic surface  $\Sigma\in\cT_g$ filled   by a set of $n$ systoles.
\end{enumerate}

In \cite{APP}, Anderson, Pittet and Parlier found the lower bound
 
\begin{align}
\fill(g)\geq (1-o(1)) \pi \frac{g}{\ln g} \label{APPbound}
\end{align}

\noindent for some explicit function $o(1)$. In this
paper, we provide an upper bound which is equivalent to the lower bound:

\begin{thm}\label{main} We have

$$\Fill(g)<9\ln(2+\sqrt{3}) \frac{g}{\ln g}=11.8526\ldots \frac{g}{\ln g},$$

for infinitely many genera $g\geq 2$.
\end{thm}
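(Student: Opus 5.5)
We construct, for infinitely many genera, an explicit surface by gluing a large number of identical hyperbolic pieces along a short ``skeleton'' of systoles. The constant $9\ln(2+\sqrt{3})$ suggests the systoles have length $2\ln(2+\sqrt3)=2\,\arcosh 2$, the translation length of a hyperbolic element of trace $4$ in $\PSL_2(\Z)$, for instance $\begin{pmatrix}2&1\\1&2\end{pmatrix}$ or a conjugate. The denominator $\ln g$ comes from the genus growing exponentially in the systole length, as it does for congruence covers. So the approach is arithmetic. We take the modular-type surface $X=\HP/\Gamma$ for a suitable torsion-free subgroup $\Gamma$ of a triangle or modular group, and pass to congruence subgroups $\Gamma(N)$, or to principal congruence covers of a compact arithmetic (quaternion) surface. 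Their genus $g_N\asymp N^3$ and their systole is at least about $4\ln N$, by the standard trace estimate: an element of $\Gamma(N)$ different from $\pm1$ has $|\Tr|\ge N^2-2$.

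The construction will not fill by all systoles of a large congruence surface, since there are about $g$ of them. Instead, we fix a set $C$ of $k$ closed geodesics of length $L\approx c\ln g$ that fill a surface of genus $g$ with very few complementary polygons. The steps are as follows.

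First, we fix a combinatorial filling pattern: a $4$-valent graph on a surface of genus $g$ with $k$ curves of comparable length, each crossing the others about $L/\ell$ times, where $\ell$ is a fixed side length. Each complementary region should be a right-angled or regular hyperbolic polygon of fixed shape. Euler characteristic then gives $V-E+F=2-2g$. With $V$ intersection points and $E=2V$, we get $F-V=2-2g$, so $V\approx 2g$ (the count depends on the polygon type), and the total length is about $V\ell\approx kL$.

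Second, we realize the pattern hyperbolically by gluing identical regular polygons (for example squares or hexagons with angles making the total $2\pi$ at each vertex), chosen so that the edge length is $\ell=\arcosh$ of something rational. The length of each curve is then a multiple of $\ell$.

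Third, we choose the gluing pattern, encoded by a permutation or a Cayley graph of $\PSL_2(\Z/N)$, so that the surface is a congruence cover. This gives $L=(2+o(1))\cdot\text{const}\cdot\ln g$, and $k=V\ell/L\approx 2g\ell/L$. Optimizing should yield exactly $9\ln(2+\sqrt3)\,g/\ln g$ from the $\ell$ and the logarithmic constant.

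The main obstacle is the fourth step: proving that the curves of $C$ are genuinely systoles, meaning that no closed geodesic is shorter than $L$ and all curves of $C$ have the same length. For the lower bound I would use the arithmetic trace estimate. Every closed geodesic corresponds to an element $\gamma\in\Gamma(N)$ with $\Tr\gamma\equiv\pm2\mod N^2$ (or mod $N$), so $2\cosh(\ell(\gamma)/2)\ge N^2-2$. I would then arrange that the curves of $C$ have traces attaining exactly this minimum, for example $\gamma=\begin{pmatrix}1+N^2a& \ast\\ \ast&\ast\end{pmatrix}$ with explicit entries coming from powers of the trace-$4$ element, $A^m\equiv 1 \mod N$. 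Since $A$ has translation length $2\ln(2+\sqrt3)$, the relation $m\cdot 2\ln(2+\sqrt3)\approx 2\ln N\cdot(\text{const})$ is what produces the factor $\ln(2+\sqrt3)$. The final counting of the orbits of these systoles, giving $k$ against $g_N\asymp N^3$ and yielding the factor $9=3\cdot 3$, would finish the bound for the infinitely many genera $g_N$. If the equality of traces fails, I would fall back on checking systoles combinatorially through the Cayley graph girth of $\PSL_2(\Z/N)$.
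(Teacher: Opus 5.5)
Your outline leaves the decisive step~4 as a goal, and the mechanism you propose for it does not work. To bound $\Fill(g)$ you must control the \emph{entire} set $\Syst(\Sigma)$. It is therefore not enough that your skeleton curves attain a congruence lower bound on $|\Tr|$: you need rigidity in the equality case. For principal congruence subgroups this rigidity fails already on the genus-$2$ surface $S$ tiled by four right-angled hexagons. There $\pi_1(S)=\Pi$ with $\cH^*=\{\pm1\}\times\Pi$ for $\cH=\Z\oplus\Z I\oplus\Z J\oplus\Z K$, $I^2=J^2=3$, $IJ=-JI$. The skeleton curves are $a_i=-2+k_i$ with $k_i\in\cH$ pure and $k_i^2=3$. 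At level $N=52$, the element $a_1^6=1351-780I\equiv-1 \bmod 52$ attains $|\Tr|=N^2-2=2702$. But so does $h=1351-468I-624J$, which has $\det h=1351^2-3(468^2+624^2)=1$ and $h\equiv-1\bmod 52$. Every conjugate of $a_i^{\pm6}$ has the form $1351+780y$ with $y\in\cH$, so neither $h$ nor $-h$ is conjugate to any $a_i^{\pm6}$: it is an extra systole that is not a lift of the skeleton. (Write $(2+\sqrt3)^j=p_j+q_j\sqrt3$. Level $N=2p_j$ is exactly the regime in which your count would give $9$: there $a_i^{2j}\equiv-1$ attains $N^2-2$ and has length about $\frac43\ln g$. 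The example is $j=3$.)

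The paper avoids this through its choice of level and congruence condition. It takes $\Pi(n)=\Pi\cap(\Z\oplus q_n\cH)$, where every $h\neq1$ is $p+q_nx$ with $x=bI+cJ+dK$ and $p^2=1+3q_n^2e$. By anisotropy $e=b^2+c^2-3d^2\geq1$, so $|\Tr h|\geq 2p_n$. In the equality case, write $\pm h=p_n+q_nx'$; then $2+x'$ is a genuine trace-$4$ unit of $\cH$ with $(2+x')^n=\pm h$. Schmutz's theorem (the six skeleton curves are the only systoles of $S$) then forces $h$ to be conjugate to some $a_i^{\pm n}$. Contrary to your remark, these covers $S(n)$ do not have about $g$ systoles. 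They have exactly $\frac6n[\Pi:\Pi(n)]$, all lifts of the skeleton, and these fill. Note also that $\HP/\Gamma(N)$ with $\Gamma(N)\subset\PSL_2(\Z)$ has cusps, so it is not a closed surface.

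Even granting step~4, nothing in your plan produces the constant $9$; ``optimizing should yield exactly'' and ``$9=3\cdot3$'' are not arguments. The construction that does work, taking all systoles of $S(n)$, has systole length $2n\ln(2+\sqrt3)$ against $g_{S(n)}<(2+\sqrt3)^{3n}$, i.e.\ about $\frac23\ln g$. It gives only $18\ln(2+\sqrt3)\,g/\ln g$. Halving this needs two ingredients absent from your proposal. The first is the An--Ihringer--Irmer theorem that $24$ of the $48$ systoles of the genus-$17$ surface $S(2)$ already fill, pulled back along the covering $S(2n)\to S(2)$ (which exists because $q_2=4$ divides $q_{2n}$). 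The second is Mathieu's theorem: if $\Syst(X)$ is a hexagonal Penner system, then every filling subset $B\subset\Syst(X)$ equals $\Syst(\Sigma)$ for some $\Sigma$ near $X$. That perturbation step is what turns a filling set of systoles into a bound on $\Fill(g)$ rather than on $\mathrm{fill}(g)$. Finally, your Euler-characteristic bookkeeping is off. A tiling by right-angled hexagons has $F=4g-4$, $V=6g-6$, $E=2V$, and the total length is $E\ell$, not $V\ell$ with $V\approx 2g$.
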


\subsection*{{ Previous related works}}

It has been known for some time that  $\Fill(g)\leq 2g$ for all $g$,
see \cite{SS},\cite{APP} and \cite{Sanki}. Indeed,   some of these references concerns the inequality  
$\fill(g)\leq 2g$.  

There are no better upper  bound known for all genera g, but there are
bounds valid for infinitely many $g$ in \cite{FB20}\cite{IM}\cite{Mathieu}.
Even the best bound 
 
\begin{align}
\Fill(g)\leq o(1) \frac{g}{\sqrt{\ln\,g}}\label{IMbound}
\end{align}
 
 \noindent for infinitely many $g$ is not equivalent to
Anderson, Parlier and Pittet lower bound (\ref{APPbound}). 

In fact, these upper bounds are not efficient: Fortier-Bourque bound
\cite{FB20} is not explicit and even
the explicit function  $o(1)$ in Inequality (\ref{IMbound}) has a very slow decay.
For example, in \cite{FB20} Fortier-Bourque  had shown that 
\begin{align}
\Fill(g)<2g \text{ for infinitely many } g. \label{2gbound}
\end{align}
This remarkable counterintuive result means that
 that the systoles can fill a surface although
the corresponding cycles  do not generate $H_1(\Sigma, \Q)$.
However, finding an explicit $g$ satisfying (\ref{2gbound}) has been challenging: the paper
 \cite{IM} only predicts the existence of such genus of size around $2^{1000}$. However  the analysis of an explicit example in \cite{Mathieu} shows 
that $\Fill(17)\leq 32$ and it was hinted that $\Fill(17)\leq 25$.

\subsection*{ Idea of the proof}

We start with the unique genus two surface $S$ which admits a tessalation by
four regular rigth-angled hexagons.  Although no name is attached to $S$, this surface has been widely studied since a long time,
see e.g.  \cite{Broughton}\cite{SS}\cite{Harvey}\cite{KuuNaa}.

The one-skeleton of the tessalation of $S$ consists of six geodesics
$c_1,\ldots,c_6$. It has been proved by P. Schmutz Schaller \cite{SS} that 
these geodesics are the six systoles of $S$. Their length is
$$L=2\arcosh(2)=\ln(2+\sqrt{3}).$$

It has been observed in \cite{KuuNaa} that $S$ is an arithmetic surface, see
also \cite{Takeuchi}.
We explicitly compute the corresponding quaternion ring.
Let $\cH$ be the  ring  defined by

$$\langle I, J\mid I^2=3, J^2=3, IJ+JI=0\rangle.$$

\noindent We  show that $\pi_1(S)$ is
isomorphic to an explicit subgroup $\Pi\subset \cH^*$ such that
$\cH^*=\Pi\times \{\pm1\}$. It follows that
$S\simeq \HP/\Pi$.

For $n\geq 2$, we define  integers $p_n$ and $q_n$ by the equation
$$(2+\sqrt{3})^n=p_n+q_n\sqrt{3},$$
\noindent and we consider the  surfaces 
$S(n)=\HP/\Pi(n)$ where 
$$\Pi(n):=\Pi\cap \big(\Z\oplus q_n\cH)$$
\noindent is a congruence subgroup of $\Pi$. Let $\pi_n: S(n)\to S$ be the 
Galois covering  with Galois group $\Pi/\Pi(n)$.

The key Lemma \ref{key} shows that the systoles of $S(n)$  are the connected components
of $\pi_n^{-1}(c_1)\cup\ldots\cup \pi_n^{-1}(c_6)$ and their lengths are
$nL$. We conclude that
$$\Card \Syst(S(n))=\frac{6}{n}[\Pi:\Pi(n)],$$
\noindent which easily implies that 

\begin{align}
\Card \Syst(S(n))\leq 18\ln(2+\sqrt{3}) \frac{g_{S(n)}}{\ln g_{S(n)}}, \label{inequation18}
\end{align}

\noindent where $g_{S(n)}$ is the genus of $S(n)$.

To finish the proof, we need to replace the constant $18$ by $9$.
We first observe that $S(2)$ is the  Galois covering
of $S$ with Galois groups $(\Z/2\Z)^4$, that is the surface of genus $17$ investigated in \cite{Mathieu}. When $n$ is even,
we have $\Pi(n)\subset\Pi(2)$, thus there is a Galois covering $\theta_n: S(n)\to S(2)$.

For any filling subset $A\subset\Syst(S(2))$, the set 

$$B:=\{c\in \Syst(S(n))\mid \theta(c)\in A\}$$
 
\noindent obviously fills $S(n)$, and its cardinality is 
$$\frac{\Card A}{\Card \Syst(S(2))} \Card \Syst(S(n))$$ 

\noindent It follows from a result on Penner systems
proved in \cite{Mathieu},   that  $\Syst(\Sigma)=B$
for some   hyperbolic surface 
$\Sigma$ near $S(n)$ in $\cT_g$. 

Therefore, we are looking at a filling set $A\subset \Syst(S(2))$ 
of minimal cardinality. In \cite{Mathieu}, we described
a filling set $A$ with $32$ systoles, and it was
hinted that $A$ can be further reduced to $25$ systoles:
this would  allow to replace $18$ by
$9.375=\frac{25}{48} 18$.
Recently  N. An, F. Ihringer and I. Irmer
got a better result  in  \cite{AII}:   $\Syst(S(2))$ contains 
a filling subset with only $24$ elements. This allows to  obtain the desired constant $9$.

\subsection*{Acknowledgements}
We thank Jean-Claude Sikorav, Don Zagier and Efim Zelmanov for interesting discussions.  We also thank Leonid Rivkin and Alessandra Frabetti
for their help concerning the figures.

\section{Conventions}

We denote as $\HP$ the Poincar\'e half-plane.
Its isometry group is isomorphic to
$\PGL_2(\R)$. Indeed an element
$$g=\begin{pmatrix} a&b\\c&d
\end{pmatrix}\in \GL_2(\R)$$
\noindent acts on $\HP$ as
$$g(z)=\frac{az+b}{cz+d}$$
\noindent if $\det g >0$ and as
$$g(z)=\frac{a\overline{z}+b}{c\overline{z}+d}$$
\noindent otherwise. The subgroup $\Isom^+(\HP)$ of preserving orientation isometries is isomorphic to 
$\PSL_2(\R)\simeq \{g\in \GL_2(\R)\mid \det\,g>0\}/\R^*.$

Computations and explanations are based on the Poincar\'e half-plane. The Poincar\'e disk model is only used for pictures.
The paper contains repeated fastidious computations involving products of two-by-two matrices, which have been checked with Python. Claude was used to draw the pictures.

Given a toplogical  surface $\cS$, a 
 faithfull representation 
$$\rho:\pi_1(\cS)\to\PSL_2(\R)$$
 is called {\it loxodromic} if its
image is a discrete subgroup of $\PSL_2(\R)$. Identifying
$\cS$ with $\HP/\rho(\pi_1(\cS)$ provides a hyperbolic metric
on $\cS$.

Any loxodromic representation can be lifted
to $\SL_2(\R)$, as  proved by L. Bers \cite{Bers}. 
However we should notice a strange phenomenon, already observed in \cite{Bers}. To keep track of the 
$\Z/3$-symmetry of the $3$-holed sphere, it is pleasant to present its fundamental group as
$$\langle x, y, z\mid xyz=1\rangle.$$
In a loxodromic representation, representing $x, y, z$ by matrices 
$X, Y, Z \in \SL_2(\R)$ with negative traces is the only way to maintains the relation
$XYZ=1$ together with the $\Z/3Z$-symmetry. This explains the 
annoying minus signs in the formulas of the paper.

\section{A surface $S$ of genus two}

The fundamental group  $\pi_1(\cS)$ 
of a closed oriented topological surface $\cS$ of genus $2$ is usually presented as
\begin{align}\langle x_1,x_2,y_1,y_2\mid 
(x_1,y_1)(x_2,y_2)=1\rangle \label{standard}\end{align}

\noindent
where  $(x,y):=xyx^{-1}y^{-1}$ denotes
the commutator. 

In the section, we describe an explicit representation of $\pi_1(\cS)$  in 
$\PSL_2(\R)$ and we consider the corresponding hyperbolic surface 
$S:=\HP/\pi_1(\cS)$.

\subsection{The Coxeter group  ${\bf W}\subset \Isom(\HP)$}

Let  $H\subset\HP$ be a regular right-angled hexagon. 
Let $(S_i)_{i\in\Z/6\Z}$ be the six sides of $H$, arranged in a cyclic order. Let $r_i$ be the
othogonal reflection along the geodesic containing $S_i$. By definition, each $r_i$ is a 
reversing-orientation isometry of the plane.

\begin{tikzpicture}[
    scale=4,
    vertex/.style={circle, fill=black, inner sep=0.9pt},
    lbl/.style={font=\small}
]

% ---- Regular right-angled hexagon, centred at the disc origin -------------
% Side length L : cosh L = 2.
% Hyperbolic circumradius R : cosh R = sqrt(3).
% Euclidean circumradius (Poincaré model) : r = sqrt(2 - sqrt 3) ~ 0.5176.
%
% Each side is a geodesic = circular arc perpendicular to the unit circle.
% Solving 2 C . v_k = 1 + |v_k|^2 for two adjacent vertices and using
% perpendicularity to the disc boundary gives, by 6-fold symmetry,
%   * arc-centre distance from origin : sqrt(2),
%   * arc radius                       : 1,
%   * each arc subtends 30 deg on its supporting circle.
%
% Convention used below:
%   - v_k placed at angle  90 - 60(k-1) from the origin (so v_1 is at the top,
%     v_2,...,v_6 go around clockwise);
%   - edge S_k is the side between v_{k-1} and v_k  (v_0 := v_6);
%   - the arc-centre supporting S_k sits at angle  120 - 60(k-1) from origin;
%   - on that circle v_k sits at angle  315 - 60(k-1)  and
%                    v_{k-1} at angle  285 - 60(k-1) ;
%     the CW arc from v_k to v_{k-1} sweeps 30 deg.
% ----------------------------------------------------------------------------

\pgfmathsetmacro{\R}{sqrt(2 - sqrt(3))}   % vertex Euclidean radius
\pgfmathsetmacro{\Cd}{sqrt(2)}             % arc-centre distance from origin

% ---- Poincaré disc boundary -----------------------------------------------
\draw[gray!55, dashed, thin] (0, 0) circle (1);

% ---- Vertices --------------------------------------------------------------
\foreach \k in {1, ..., 6} {
  \pgfmathsetmacro{\angV}{90 - 60*(\k-1)}
  \coordinate (v\k) at ({\R*cos(\angV)}, {\R*sin(\angV)});
}

% ---- Geodesic sides --------------------------------------------------------
\foreach \k in {1, ..., 6} {
  \pgfmathsetmacro{\angStart}{315 - 60*(\k-1)}
  \pgfmathsetmacro{\angEnd}{285 - 60*(\k-1)}
  \draw[thick] (v\k) arc[start angle=\angStart, end angle=\angEnd, radius=1];
}

% ---- Vertex dots ----------------------------------------------------------
\foreach \k in {1, ..., 6}
  \node[vertex] at (v\k) {};

% ---- Edge labels S_1 ... S_6 ----------------------------------------------
% Each label sits on the outward radial direction through the midpoint of
% the corresponding side, just past the arc.
\pgfmathsetmacro{\rlab}{0.63}
\foreach \k in {1, ..., 6} {
  \pgfmathsetmacro{\angL}{120 - 60*(\k-1)}
  \node[lbl] at ({\rlab*cos(\angL)}, {\rlab*sin(\angL)}) {$S_{\k}$};
}

\node[draw] at (0,-1.2) {The regular hexagon $H$ in the Poincar\'e disc};

\label{6gon}
\end{tikzpicture}

Let ${\bf W}$ the group generated by $(r_i)_{i\in\Z/6\Z}$.
The following lemma is well-known

\begin{lemma} 
The group ${\bf W}$ acts properly discontinuously
on $\HP$ and its fondamental domain is $H$.

Moreover, ${\bf W}$ is a Coxeter group with presentation
$$\langle (r_i)_{i\in\Z/6\Z} \mid 
r_i^2=
(r_i r_{i+1})^2=1\rangle.$$
\end{lemma}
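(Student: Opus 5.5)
This is the standard Poincaré polygon theorem for reflection groups; I would deduce it from that theorem, or prove it directly by the usual developing-map argument. The plan has three steps.

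\emph{Step 1: tiling.} Let $\widetilde{\bf W}$ be the abstract Coxeter group with the stated presentation, and let $\phi:\widetilde{\bf W}\to{\bf W}$ be the natural surjection. The map is well defined because the relations hold in $\Isom(\HP)$:
\begin{itemize}
\item $r_i^2=1$, since each $r_i$ is a reflection;
\item $(r_ir_{i+1})^2=1$, since the geodesics carrying $S_i$ and $S_{i+1}$ meet at a right angle, so $r_ir_{i+1}$ is the rotation by angle $\pi$ about their common vertex.
\end{itemize}
Build the space $X=(\widetilde{\bf W}\times H)/\sim$, where $(w,p)\sim(wr_i,p)$ for $p\in S_i$. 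Define the developing map $D:X\to\HP$ by $D(w,p)=\phi(w)p$.

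\emph{Step 2: $D$ is a covering, and the main obstacle.} The main work is to show that $D$ is a local isometry; I would check this at three kinds of points.
\begin{itemize}
\item Interior points of a copy of $H$: clear.
\item Points on the interior of a side $S_i$: two copies of $H$ meet there, namely $wH$ and $wr_iH$, lying on opposite sides of the geodesic, so $D$ is injective near such a point.
\item Vertices $S_i\cap S_{i+1}$: the link of the vertex in $X$ consists of the copies $w\langle r_i,r_{i+1}\rangle H$. In $\widetilde{\bf W}$ the dihedral subgroup $\langle r_i,r_{i+1}\rangle$ has order exactly $4$ (map $\widetilde{\bf W}$ onto the dihedral group of order $4$ to see it does not collapse). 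So $4$ right angles fit around the vertex, giving total angle $2\pi$, and $D$ is a local isometry there.
\end{itemize}
Next, $X$ with the path metric is complete: every point has an embedded ball of uniform radius $\varepsilon>0$, which works because $H$ is compact. A local isometry from a complete connected space onto the simply connected $\HP$ is a covering map, hence an isometry.

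\emph{Step 3: conclusions.} Since $D$ is an isometry, the translates $\phi(w)H$ tile $\HP$ with disjoint interiors, and $\phi$ is injective. Injectivity holds because if $\phi(w)=1$, then $D(w,p)=D(1,p)$ for $p$ interior, so $w=1$. Therefore ${\bf W}\cong\widetilde{\bf W}$ has the stated Coxeter presentation. Finally, ${\bf W}$ acts properly discontinuously with fundamental domain $H$, because the tiling is locally finite: only finitely many tiles meet any compact set, by the uniform $\varepsilon$ above.
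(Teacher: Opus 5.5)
Your proposal is correct and takes essentially the same approach as the paper, which proves the lemma by citing Poincaré's polygon theorem with each side $S_i$ paired to itself by $r_i$ (and mentions Coxeter group theory as an alternative). You go further by sketching the developing-map proof of that theorem in this special case: the relations hold, the order-$4$ stabilizer $\langle r_i,r_{i+1}\rangle$ gives total angle $2\pi$ at each vertex, and completeness makes $D$ an isometry. That is exactly the content the paper's citation leaves implicit.
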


\begin{proof} The statement is a consequence of Poincar\'e
Theorem \cite{Poincare}, where each side $S_i$ is paired with itself through the
reflexion $r_i$, see \cite{Maskit} for a  nice account of
Poincar\'e Theorem. It also follows from the theory of Coxeter groups, see
\cite{Davis}.
\end{proof}

For $w\in {\bf W}$, we define $l_0(w)$ and $l_1(w)$ as  the number of  even/odd letters in a reduced decomposition $w=r_{i_1}\dots r_{i_l}$ of $w$. Therefore,
$l(w):=l_0(w)+l_1(w)$ is the Bruhat length of $w$. Define the signature homomorphisms
$\epsilon_0, \epsilon_1:W\to\{\pm 1\}$ by 

\centerline{$\epsilon_i(w)=(-1)^{l_i(w)}$.}

Set  $\Pi:=\Ker\epsilon_0\cap\Ker\epsilon_1$ and $S=\HP/\Pi$.

\begin{lemma} \label{fundamental}

\begin{enumerate}
\item[(a)] Any isometry $w\in \Pi$ preserves the orientation,
\item[(b)] the subgroup $\Pi$ acts freely on $\HP$, and 
\item[(c)] a  fundamental domain for $\Pi$ is 
$$Q:= H\cup r_1(H)\cup r_2(H)\cup r_1r_2(H).$$
\end{enumerate}

Consequently, $S$ is a closed oriented surface of genus $2$, and
$\Pi\simeq \pi_1(S)$.
\end{lemma}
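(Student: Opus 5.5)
Part (a) comes straight from the signatures. Each $r_i$ reverses orientation, so the orientation character of ${\bf W}$ is $w\mapsto(-1)^{l(w)}$, where $l(w)=l_0(w)+l_1(w)$. This equals $\epsilon_0(w)\epsilon_1(w)$, and on $\Pi$ the product is $1$. The homomorphisms $\epsilon_0,\epsilon_1$ are well defined because the Coxeter relations $r_i^2=1$ and $(r_ir_{i+1})^2=1$ each contain an even number of even letters and an even number of odd letters. Indeed $r_ir_{i+1}$ always consists of one even and one odd letter.

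For (b), I will use the standard fact that the only points of $\HP$ with nontrivial stabilizer in ${\bf W}$ are those lying on a side of some translate $w(H)$. A point in the interior of a side $w(S_i)$ has stabilizer $\{1,wr_iw^{-1}\}$. A vertex $w(S_i\cap S_{i+1})$ has stabilizer $w\langle r_i,r_{i+1}\rangle w^{-1}$, which is the Klein four-group $\{1,r_i,r_{i+1},r_ir_{i+1}\}$ conjugated by $w$. Since $\Pi$ is normal in ${\bf W}$, it is enough to check that no nontrivial element of these conjugated subgroups lies in $\Pi$, that is, that $r_i$, $r_{i+1}$ and $r_ir_{i+1}$ all lie outside $\Pi$. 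For $r_i$ and $r_{i+1}$ this holds because one of $\epsilon_0,\epsilon_1$ equals $-1$ on each of them. For $r_ir_{i+1}$ it holds because $\epsilon_0(r_ir_{i+1})=\epsilon_1(r_ir_{i+1})=-1$. Hence $\Pi$ acts freely. Discreteness is inherited from ${\bf W}$ by the first Lemma.

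For (c), the map $(\epsilon_0,\epsilon_1):{\bf W}\to\{\pm1\}^2$ is surjective, since $r_1$ maps to $(1,-1)$ and $r_2$ maps to $(-1,1)$, taking indices $1$ odd and $2$ even. So $\Pi$ has index $4$ and $\{1,r_1,r_2,r_1r_2\}$ is a set of coset representatives. Writing ${\bf W}=\bigsqcup_{t}\Pi t$ over these representatives, and using that $H$ is a fundamental domain for ${\bf W}$, we get $\HP=\bigcup_{\gamma\in\Pi}\gamma(Q)$ with the translates having disjoint interiors. Thus $Q$ is a fundamental domain for $\Pi$.

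For the conclusion, $\Pi$ acts freely, properly discontinuously and by orientation-preserving isometries. Hence $S=\HP/\Pi$ is an oriented hyperbolic surface with universal cover $\HP$, so $\Pi\simeq\pi_1(S)$. It is compact because $Q$ is compact. For the genus I will use Gauss–Bonnet: a right-angled hexagon has area $(6-2)\pi-6\cdot\frac{\pi}{2}=\pi$, so $\Area(S)=4\pi=2\pi(2g-2)$, which gives $g=2$.

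The only delicate point is the stabilizer description in (b), in particular that the vertex stabilizers are exactly the dihedral groups $w\langle r_i,r_{i+1}\rangle w^{-1}$. I will quote this from Poincaré's theorem, as in the proof of the first Lemma, or from Coxeter group theory \cite{Davis}: stabilizers of points are conjugates of the special subgroups generated by the reflections in the sides containing the point.
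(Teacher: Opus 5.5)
Your proposal is correct and follows the same route as the paper: the orientation character $(-1)^{l(w)}=\epsilon_0(w)\epsilon_1(w)$ for (a), reduction by normality to the stabilizers $\{1,r_i\}$ and $\langle r_i,r_{i+1}\rangle$ for (b), and Gauss--Bonnet with $\Area(Q)=4\,\Area(H)=4\pi$ for the genus. You also supply two points the paper leaves implicit: that $\epsilon_0,\epsilon_1$ are well-defined homomorphisms, and the index-$4$ coset argument with representatives $1,r_1,r_2,r_1r_2$ that actually proves (c).
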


\begin{tikzpicture}[
    scale=4,
    vertex/.style={circle, fill=black, inner sep=0.9pt},
    lbl/.style={font=\small}
]

% ---- Geometry (regular right-angled hexagon, cosh L = 2) -------------------
% Vertex 1 at the centre of the Poincaré disc; interior edges are diameters.
% Midpoints (labels 2, 6) sit at Euclidean distance 1/sqrt(3) from the centre.
% Each outer geodesic side is a circular arc perpendicular to the unit circle.
%
%   long arc (between two 3's or two 5's):
%       centre on an axis at (0, 2/sqrt(3)) (and rotations),  radius 1/sqrt(3)
%   short arc (between a 3 and a 4, or a 5 and a 4):
%       centre at (1/sqrt(3), sqrt(3)/2) (and rotations),     radius sqrt(3)/6
%
% Intersecting the appropriate pairs perpendicularly yields the vertex
% positions in the upper-right quadrant:
%   V3 = (sqrt(3)/5, 2*sqrt(3)/5),
%   V4 = (1/sqrt(3), 1/sqrt(3)),
%   V5 = (2*sqrt(3)/5, sqrt(3)/5).
% ----------------------------------------------------------------------------

\pgfmathsetmacro{\aS}{atan(3/4)}        % small sweep  (~36.87 deg)
\pgfmathsetmacro{\aL}{atan(4/3)}        % large sweep  (~53.13 deg)

\pgfmathsetmacro{\midR}{1/sqrt(3)}      % midpoint radius
\pgfmathsetmacro{\arcR}{1/sqrt(3)}      % long-arc radius
\pgfmathsetmacro{\smcR}{sqrt(3)/6}      % short-arc radius
\pgfmathsetmacro{\Vthx}{sqrt(3)/5}      % V3 x
\pgfmathsetmacro{\Vthy}{2*sqrt(3)/5}    % V3 y
\pgfmathsetmacro{\Vfvx}{2*sqrt(3)/5}    % V5 x
\pgfmathsetmacro{\Vfvy}{sqrt(3)/5}      % V5 y

% ---- Poincaré disc boundary -----------------------------------------------
\draw[gray!55, dashed, thin] (0,0) circle (1);

% ---- Named coordinates ----------------------------------------------------
\coordinate (Ctr) at (0, 0);

\coordinate (M_T) at (0, \midR);
\coordinate (M_B) at (0, -\midR);
\coordinate (M_L) at (-\midR, 0);
\coordinate (M_R) at (\midR, 0);

\coordinate (Cne) at ( \midR,  \midR);
\coordinate (Cnw) at (-\midR,  \midR);
\coordinate (Cse) at ( \midR, -\midR);
\coordinate (Csw) at (-\midR, -\midR);

\coordinate (V3ne) at ( \Vthx,  \Vthy);
\coordinate (V3nw) at (-\Vthx,  \Vthy);
\coordinate (V3se) at ( \Vthx, -\Vthy);
\coordinate (V3sw) at (-\Vthx, -\Vthy);

\coordinate (V5ne) at ( \Vfvx,  \Vfvy);
\coordinate (V5nw) at (-\Vfvx,  \Vfvy);
\coordinate (V5se) at ( \Vfvx, -\Vfvy);
\coordinate (V5sw) at (-\Vfvx, -\Vfvy);

% ---- Outer boundary: 16 arcs (4 quadrants by rotation) --------------------
\foreach \rot in {0, 90, 180, 270} {
  \begin{scope}[rotate=\rot]
    \draw[thick] (0, \midR)
        arc[start angle=270, end angle=270+\aS, radius=\arcR];
    \draw[thick] (\Vthx, \Vthy)
        arc[start angle=180+\aS, end angle=270, radius=\smcR];
    \draw[thick] (\midR, \midR)
        arc[start angle=180, end angle=180+\aL, radius=\smcR];
    \draw[thick] (\Vfvx, \Vfvy)
        arc[start angle=180-\aS, end angle=180, radius=\arcR];
  \end{scope}
}

% ---- Interior cross (geodesics through the disc centre = diameters) -------
\draw[thick] (M_T) -- (M_B);
\draw[thick] (M_L) -- (M_R);

% ---- Vertices -------------------------------------------------------------
\foreach \p in {Ctr,
                M_T, M_B, M_L, M_R,
                Cne, Cnw, Cse, Csw,
                V3ne, V3nw, V3se, V3sw,
                V5ne, V5nw, V5se, V5sw}
    \node[vertex] at (\p) {};

% ---- Edge labels in the upper-right (NE) hexagon --------------------------
% Edge between vertex k and vertex (k+1) is S_{k+1}, so edge 6-1 -> S_1, etc.
% Outer-arc labels are placed at the arc midpoints, offset along the outward
% radial direction (from the corresponding arc centre toward the disc boundary
% side), so they sit outside the figure.
% ----------------------------------------------------------------------------

% Common offset (in disc-units) used to push outer-arc labels off the curve
\pgfmathsetmacro{\offa}{0.075}

% S_1: midpoint of inner edge Ctr -- M_R (label sits inside the NE hexagon)
\node[lbl] at ({\midR/2}, {0.07}) {$S_1$};

% S_2: midpoint of inner edge Ctr -- M_T  (label sits inside the NE hexagon)
\node[lbl] at ({0.07}, {\midR/2}) {$S_2$};

% S_3: outside the M_T -- V3ne arc
% arc centre (0, 2/sqrt 3), radius 1/sqrt 3, midway angle 270 + aS/2
\pgfmathsetmacro{\angA}{270 + \aS/2}
\pgfmathsetmacro{\mAx}{0       + \arcR*cos(\angA)}
\pgfmathsetmacro{\mAy}{2/sqrt(3) + \arcR*sin(\angA)}
% radial outward direction from disc origin
\pgfmathsetmacro{\rAn}{sqrt(\mAx*\mAx + \mAy*\mAy)}
\node[lbl] at ({\mAx + \offa*\mAx/\rAn}, {\mAy + \offa*\mAy/\rAn}) {$S_3$};

% S_4: outside the V3ne -- Cne arc
% arc centre (1/sqrt 3, sqrt 3/2), radius sqrt 3/6, midway angle 180+aS+aL/2
\pgfmathsetmacro{\angB}{180 + \aS + \aL/2}
\pgfmathsetmacro{\mBx}{1/sqrt(3) + \smcR*cos(\angB)}
\pgfmathsetmacro{\mBy}{sqrt(3)/2 + \smcR*sin(\angB)}
\pgfmathsetmacro{\rBn}{sqrt(\mBx*\mBx + \mBy*\mBy)}
\node[lbl] at ({\mBx + \offa*\mBx/\rBn}, {\mBy + \offa*\mBy/\rBn}) {$S_4$};

% S_5: outside the Cne -- V5ne arc
% arc centre (sqrt 3/2, 1/sqrt 3), radius sqrt 3/6, midway angle 180 + aL/2
\pgfmathsetmacro{\angC}{180 + \aL/2}
\pgfmathsetmacro{\mCx}{sqrt(3)/2 + \smcR*cos(\angC)}
\pgfmathsetmacro{\mCy}{1/sqrt(3) + \smcR*sin(\angC)}
\pgfmathsetmacro{\rCn}{sqrt(\mCx*\mCx + \mCy*\mCy)}
\node[lbl] at ({\mCx + \offa*\mCx/\rCn}, {\mCy + \offa*\mCy/\rCn}) {$S_5$};

% S_6: outside the V5ne -- M_R arc
% arc centre (2/sqrt 3, 0), radius 1/sqrt 3, midway angle 180 - aS/2
\pgfmathsetmacro{\angD}{180 - \aS/2}
\pgfmathsetmacro{\mDx}{2/sqrt(3) + \arcR*cos(\angD)}
\pgfmathsetmacro{\mDy}{0         + \arcR*sin(\angD)}
\pgfmathsetmacro{\rDn}{sqrt(\mDx*\mDx + \mDy*\mDy)}
\node[lbl] at ({\mDx + \offa*\mDx/\rDn}, {\mDy + \offa*\mDy/\rDn}) {$S_6$};

\node[draw] at (0,-1.2) {The fundamental domain $Q$};
\label{12gon}
\end{tikzpicture}

\begin{proof} Assertion (a) follows from the fact that
$\det w= (-1)^{l(w)}$ for all $w\in {\bf W}$.

In order to establish Assertion (b), we prove that any
isometry $w\in \Pi$ fixing one point $P\in \HP$ is the identity. Replacing $w$ by a conjugate, we can assume that $P$ lies in the fundamental hexagon $H$. 
We observe

\begin{enumerate}
\item[(a)]
The group ${\bf W}$ acts freely on the interior $H^0$ of $H$, so the statement is clear if $P$ lies in $H^0$. 

\item[(b)]
If $P$ lies in the interior 
of some side $S_i$, then $w$ lies in the group 
$\{1, r_i\}$. Since
$\Pi\cap \{1, r_i\}$, we conclude that $w=1$.

\item[(c)]
Otherwise $P$ is a vertex of $H$, that is 
$\{P\}=S_i\cap S_{i+1}$ for some $i\in\Z/6\Z$.  Therefore $w$ belongs to the subgroup $\langle r_i, r_{i+1}\rangle$. 
Since
$\Pi\cap \langle r_i, r_{i+1}\rangle=\{1\}$, we deduce as well that $w=1$.
\end{enumerate}

It follows that $S=\HP/\Pi$ is a closed oriented hyperbolic 
surface. By Gauss-Bonnet formula,
we have $\Area(H)=4\pi-6\pi/2=\pi$, therefore

$$\Area(S)=\Area(Q)=4 \Area(H)=4\pi.$$

\noindent It follows that  $S$ is a surface of genus two.
\end{proof}

For $i\in\Z/6\Z$, set $a_i=r_{i-1}r_{i+1}$. From now on, we will denote by
$1, 2,\ldots, 6$ the  representatives of $\Z/6Z$.

\begin{lemma}
 The subgroup $\Pi$ is generated by $a_1,\ldots, a_6$
 and satisfies the following relations
 \begin{align*}
 a_1 a_3 a_5&=1\\
 a_2 a_4 a_6&=1 \\
 a_1 a_2 a_3 a_4 a_5 a_6&=1 
 \end{align*}
\end{lemma}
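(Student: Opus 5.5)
The plan is to treat the relations and the generation separately. The relations are a direct computation in the Coxeter presentation of ${\bf W}$ from the first lemma. Generation will come from the Reidemeister--Schreier method applied to the index-four subgroup $\Pi$.

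\textbf{Membership and relations.} Each $a_i=r_{i-1}r_{i+1}$ is a product of two letters of the same parity. Hence $\epsilon_0(a_i)=\epsilon_1(a_i)=1$, so $a_i\in\Pi$. The first two relations telescope using $r_j^2=1$:
$$a_1a_3a_5=r_6r_2\,r_2r_4\,r_4r_6=1,\qquad a_2a_4a_6=r_1r_3\,r_3r_5\,r_5r_1=1.$$
For the third relation, write
$$a_1\cdots a_6=r_6r_2\,r_1r_3\,r_2r_4\,r_3r_5\,r_4r_6\,r_5r_1.$$
We then repeatedly use that adjacent reflections commute, i.e. $(r_jr_{j+1})^2=1$, so that $r_jr_{j+1}r_j=r_{j+1}$. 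The steps are:
\begin{itemize}
\item $r_2r_1=r_1r_2$ and $r_2r_3r_2=r_3$ reduce the word to $r_6r_1r_3r_4r_3r_5r_4r_6r_5r_1$;
\item $r_3r_4r_3=r_4$ then gives $r_6r_1r_4r_5r_4r_6r_5r_1$;
\item $r_4r_5r_4=r_5$ gives $r_6r_1r_5r_6r_5r_1$;
\item $r_5r_6r_5=r_6$ gives $r_6r_1r_6r_1=(r_6r_1)^2=1$.
\end{itemize}

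\textbf{Generation.} The map $(\epsilon_0,\epsilon_1):{\bf W}\to\{\pm1\}^2$ is onto, since $r_1$ is odd and $r_2$ is even, and its kernel is $\Pi$. So $\Pi$ has index $4$, with Schreier transversal $T=\{1,r_1,r_2,r_1r_2\}$. By Reidemeister--Schreier, $\Pi$ is generated by the at most $24$ elements $t\,r_i\,\overline{tr_i}^{\,-1}$ for $t\in T$ and $i\in\Z/6\Z$, where $\overline{w}$ is the representative of $w\Pi$. The plan is to check that each of these lies in $\langle a_1,\dots,a_6\rangle$.

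For $t=1$ this is immediate:
\begin{itemize}
\item $r_1r_1=1$, $r_3r_1=a_2^{-1}$, $r_5r_1=a_6$;
\item $r_2r_2=1$, $r_4r_2=a_3^{-1}$, $r_6r_2=a_1$.
\end{itemize}
Similarly, for $t=r_1$ and odd $i$ one gets $r_1r_i\in\{1,a_2,a_6^{-1}\}$. The same happens for $t=r_2$ and even $i$.

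The remaining Schreier generators have the form $r_1r_ir_2r_1$, $r_2r_ir_1r_2$, or products such as $r_1r_2r_ir_2r_1$, etc. The cleanest way to handle them is to first compute the conjugation action of $r_1$ and $r_2$ on the $a_j$. In all cases one should find that $r_k a_j r_k$ is again a short word in the $a$'s. For example, when $r_k$ commutes with both letters of $a_j$ one gets $r_ka_jr_k=a_j$. In the other cases one inserts $r_m^2=1$ and uses the commutation of adjacent reflections, as in the computation above. Once it is known that $\langle a_1,\dots,a_6\rangle$ is normalized by $r_1$ and $r_2$, every Schreier generator reduces to a conjugate of a $t=1$ generator or of some $a_j$, and generation follows.

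\textbf{Main obstacle.} I expect the only real work to be this bookkeeping of the conjugates $r_1a_jr_1$ and $r_2a_jr_2$. It is finite and mechanical, but error-prone.

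If it becomes unwieldy, the fallback is geometric. Apply Poincar\'e's theorem to the fundamental domain $Q$ of Lemma~\ref{fundamental}. The side pairings of the $12$-gon $Q$ generate $\Pi$, and each pairing can be read off as one of the $a_i^{\pm1}$ or its conjugate by $r_1$, $r_2$ or $r_1r_2$. This again reduces generation to the same conjugation formulas.
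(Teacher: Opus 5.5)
Your proposal is correct in outline and is essentially the paper's proof. The paper also reduces generation to showing that $A=\langle a_1,\dots,a_6\rangle$ is stable under conjugation by the reflections. It then finishes with an index count: $A$ is normal and $r_i\equiv r_{i\pm2}$ mod $A$, so $[{\bf W}:A]\le 4=[{\bf W}:\Pi]$.

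Your Reidemeister--Schreier step is a formal version of that count, and it can be shortened. Once $r_1,r_2$ normalize $A$, the set $A\langle r_1,r_2\rangle$ is a subgroup. It contains $r_3=r_1a_2$, $r_4=r_2a_3$, $r_5=a_6r_1$ and $r_6=a_1r_2$, hence equals ${\bf W}$. So $A$ has at most four cosets, and $A=\Pi$. Equivalently, your transversal $T$ is itself the subgroup $\langle r_1,r_2\rangle$, so each Schreier generator is $t\,(r_i\overline{r_i}^{\,-1})\,t^{-1}$. A small slip: $r_1r_2r_ir_2r_1$ is never in $\Pi$. The generators for $t=r_1r_2$ are $r_1r_2r_ir_2$ for $i$ odd and $r_1r_2r_ir_1$ for $i$ even.

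What you still have to supply is the computation you left as ``one should find''. It is the entire nontrivial content of the paper's argument, so it cannot stay a prediction. The paper records it as
\begin{align*}
&r_ia_ir_i=a_i,\qquad r_ia_{i\pm1}r_i=a_{i\pm1}^{-1},\qquad r_ia_{i+3}r_i=a_{i+1}a_{i-1},\\
&r_ia_{i+2}r_i=a_{i+1}a_{i+2}a_{i+1}^{-1},\qquad r_ia_{i-2}r_i=a_{i-1}^{-1}a_{i-2}a_{i-1}.
\end{align*}
Each follows in two lines from $r_j^2=1$ and $r_jr_{j+1}=r_{j+1}r_j$. For instance,
\[
a_{i+1}a_{i+2}a_{i+1}^{-1}=r_ir_{i+2}\,r_{i+1}r_{i+3}\,r_{i+2}r_i=r_ir_{i+1}r_{i+3}r_i=r_ia_{i+2}r_i.
\]
Your derivation of $a_1a_2\cdots a_6=1$ is correct, and more detailed than the paper, which simply calls the three relations obvious.
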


\begin{proof}
The three relations are obvious, thus we only need to prove that
$\Pi$ is generated by the 
$a_i$, for $i\in\Z/6\Z$. 

Easy computations shows that 
\begin{align*}
r_ia_i r_i&=a_i\\
r_ia_{i\pm1}r_i&=a_{i\pm1}^{-1}\\
r_ia_{i+2}r_i&=a_{i+1}a_{i+2}a_{i+1}^{-1}\\
r_ia_{i-2}r_i&=a_{i-1}^{-1}a_{i-2}a_{i-1}\\
r_ia_{i+3}r_i&=a_{i+1}a_{i-1}=a_{i+1}a_{i+3}^{-1}a_{i+1}^{-1}
\end{align*}

It follows that the subgroup 
$A:=\langle a_i\mid i\in\Z/6\Z\rangle$ is invariant.
Since 
$$r_i\equiv r_{i+2}\equiv r_{i-2}\,\,\mod A,$$ 
the index of the sugroup $A\subset{\bf W}$ is $\leq 4$, therefore
$A=\Pi$, which proves that
$\Pi$ is generated by $a_1,\ldots, a_6$.
\end{proof}

\begin{lemma}\label{relations} The group $\Pi$ is generated by $a_1,\ldots, a_6$
and defined by the following relations

 \begin{align}
 a_1 a_3 a_5&=1 \label{free1}\\
 a_2 a_4 a_6&=1 \label{free2}\\
 a_1 a_2 a_3 a_4 a_5 a_6&=1 \label{relation}
 \end{align}
\end{lemma}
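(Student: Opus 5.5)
The plan is to compare $\Pi$ with the abstract group $G$ defined by generators $a_1,\ldots,a_6$ and relations (\ref{free1}), (\ref{free2}), (\ref{relation}). By the previous lemma these relations hold in $\Pi$ and the $a_i$ generate $\Pi$. Hence there is a surjective homomorphism $\phi: G\to\Pi$. The task is to show that $\phi$ is injective. I will do this by identifying $G$ abstractly with the fundamental group of a closed orientable surface of genus $2$, and then using the Hopfian property together with Lemma \ref{fundamental}.

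First I eliminate $a_5$ and $a_6$ using (\ref{free1}) and (\ref{free2}), via the Tietze transformations $a_5=(a_1a_3)^{-1}$ and $a_6=(a_2a_4)^{-1}$. Then $G$ becomes the one-relator group on $a_1,a_2,a_3,a_4$ with the single relator
$$w=a_1a_2a_3a_4\,a_3^{-1}a_1^{-1}\,a_4^{-1}a_2^{-1}.$$
In $w$ each generator occurs exactly twice, once with exponent $+1$ and once with exponent $-1$.

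Consequently the presentation complex of $G$ is a closed orientable surface $F$. It is obtained from an octagon whose sides are labelled by $w$, by identifying the two sides with the same letter in the orientation-reversing way. This cell structure has one face, four edges and some number $V\geq 1$ of vertices, so $\chi(F)=V-3$.

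To pin down the genus, I avoid tracking the vertex cycles by hand. Instead I note that the abelianization of $w$ is trivial, so $H_1(G,\Z)=\Z^4$. For a closed orientable surface, $\operatorname{rank} H_1=2-\chi(F)$. This gives $\chi(F)=-2$, hence $V=1$ and $F$ has genus $2$. Thus $G\simeq\pi_1(F)$ is a genus-two surface group. As a sanity check, one could also follow the vertex cycle of the octagon directly to confirm $V=1$. This bookkeeping is the only step where a mistake is likely, but the homology argument makes it unnecessary.

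By Lemma \ref{fundamental}, $\Pi\simeq\pi_1(S)$ with $S$ of genus $2$, so $\phi$ is a surjection from a genus-two surface group onto an isomorphic group. Surface groups are finitely generated and residually finite (for instance, as finitely generated linear groups, by Malcev's theorem), and therefore Hopfian. Hence $\phi$ is an isomorphism, and $\Pi$ is presented by (\ref{free1}), (\ref{free2}), (\ref{relation}).

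If one prefers a self-contained argument, an alternative is the Reidemeister–Schreier method. It starts from the Coxeter presentation of ${\bf W}$ with coset representatives $1,r_1,r_2,r_1r_2$ of $\Pi$, and then reduces the resulting presentation by Tietze moves. This route is more computational, so I would use the Hopfian argument above.
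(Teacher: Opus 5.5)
Your overall plan matches the paper's: the surjection $G\to\Pi$ from the preceding lemma, a Tietze reduction to a one-relator group, identification with a genus-two surface group, and the Hopfian property. However, the step where you identify the genus does not work.

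Gluing the octagon according to $w$ does give a closed orientable surface $F$. But the presentation complex of $G$ has a single vertex: it is $F$ with its $V$ vertices collapsed to one point. So in general $G$ is the free product of $\pi_1(F)$ with a free group of rank $V-1$, and $H_1(G,\Z)\cong H_1(F,\Z)\oplus\Z^{V-1}$. Since $\operatorname{rank}H_1(F)=2-\chi(F)=5-V$, this has rank $4$ for \emph{every} value of $V$. The vanishing of the abelianized relator holds for any word in which each letter appears once with each sign, and it carries no information about $V$. Your deduction $\chi(F)=-2$ uses $H_1(G)=H_1(F)$, which already presupposes $V=1$, so it is circular.

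Here is a concrete failure. Take $w'=a\,b\,c\,a^{-1}d\,b^{-1}c^{-1}d^{-1}$. It is cyclically reduced, each letter occurs once with each sign, and $H_1=\Z^4$. Label the octagon vertices $v_0,\dots,v_7$ so that the $k$-th side runs from $v_{k-1}$ to $v_k$. Then the vertex classes are $\{v_0,v_4\}$, $\{v_1,v_3,v_6\}$, $\{v_2,v_5,v_7\}$. So $F$ is a torus and the group is $\Z^2*\Z*\Z$, not a genus-two surface group.

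The vertex-cycle computation you called unnecessary is therefore exactly the missing step. For your $w$ it does succeed. With the same labelling, the side pairings for $a_1,a_4,a_3,a_4,a_2,a_3,a_1,a_2$ successively give $v_0\sim v_6\sim v_4\sim v_3\sim v_7\sim v_2\sim v_5\sim v_1\sim v_0$. Hence $V=1$, $\chi(F)=-2$, and $G$ is a genus-two surface group; with this added, the rest of your argument goes through.

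The paper avoids this bookkeeping by eliminating $a_3$ and $a_6$ instead of $a_5$ and $a_6$. The remaining relator $a_1a_2a_1^{-1}a_5^{-1}a_4a_5a_4^{-1}a_2^{-1}$ is a cyclic conjugate of $(a_2^{-1},a_1)(a_5^{-1},a_4)$. This is visibly the standard genus-two relator in the generators $a_2^{-1},a_1,a_5^{-1},a_4$.
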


\begin{proof} Let $\Pi'$ be the group  generated by $a_1,\ldots, a_6$
and defined by Relations  \ref{free1}, \ref{free2} and \ref{relation}.
The first two 
relations simply
mean that
$\Pi'$ is generated by the four elements $a_1, a_2, a_4$ and $a_5$. Substituting $a_1^{-1}a_5^{-1}$ for $a_3$ and 
$a_4^{-1}a_2^{-1}$ for $a_6$ in Relation \ref{relation},
we conclude that $\Pi'$ is defined by the single relation

$$a_1a_2(a_1^{-1}a_5^{-1})a_4.a_5
(a_4^{-1}a_2^{-1})=1$$

\noindent which is equivalent to

$$(a_2^{-1}, a_1).(a_5^{-1}, a_4)=1.$$

\noindent Hence $\Pi'$ is isomorphic, as an abstract group,  to $\pi_1(S)$. By Lemma
\ref{relations}, there is  an epimorphism  $\pi:\Pi'\to \Pi$, sending each generator $a_i\in \Pi'$ to
$a_i\in\Pi$. By Lemma \ref{fundamental}, the group $\Pi$ is also isomorphic to $\pi_1(S)$.
Since $\pi_1(S)$ is finitely generated and residually finite, 
the group $\pi_1(S)$ is Hopfian \cite{Malcev}, thus
$\pi$ is an isomorphism, which proves the lemma.
\end{proof}

\section{The embedding $\Pi\subset \SL_2(\R)$}

By definition, the  group $\Pi$ is a subgroup
of $\Isom^+(\HP)\simeq \PSL_2(\R)$. In fact, by Bers Theorem
\cite{Bers}, $\Pi$ can be realized as a subgroup of
$\SL_2(\R)$. 
In order to describe an explicit
embedding $\Pi\subset\SL_2(\R)$, we first define  a certain central extension
$\widehat{\bf W}$ of $\bf{W}$ in the group
$\GL_2(\R)$.

\subsection{The embedding $\widehat{\bf W}\subset \GL_2(\R)$}

In order to explicitely describe $\bf{W}$ as a subgroup of $\PGL_2(\R)$,
we first describe an explicit embedding of $H$ in $\HP$.
Let $\Omega$ be the center of the right-angled hexagon $H$,
let $L$ be the length of its sides $S_i$  and let $M$ be the center of the side $S_1$.

\begin{lemma} We have

\begin{enumerate}
\item[(a)] $\cosh\,L=2$,
\item[(b)] the geodesic arc $\Omega M$  is orthogonal to $S_1$,
\item[(c)] $d_\HP(\Omega, M)=\ln(1+\sqrt{2})$.
\end{enumerate}
\end{lemma}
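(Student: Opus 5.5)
We argue with the symmetry of $H$ and elementary hyperbolic trigonometry, splitting $H$ into twelve congruent right triangles around its center.

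For (a), consider the trirectangle (Lambert quadrilateral) obtained by cutting $H$ along the geodesic joining the midpoints $M$ of $S_1$ and $M'$ of $S_4$. Alternatively, we use the right-angled hexagon formula. For a right-angled hexagon with alternate sides $a,b,c$ and opposite sides $a',b',c'$ we have
$$\cosh c'=\frac{\cosh a\cosh b+\cosh c}{\sinh a\sinh b}.$$
In the regular case all sides equal $L$, so $\cosh L=\frac{\cosh^2L+\cosh L}{\sinh^2L}$. Writing $x=\cosh L$, this becomes $x(x^2-1)=x^2+x$, that is $x(x+1)(x-2)=0$. Since $x\ge 1$, we get $x=2$.

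For (b), the rotation of order $6$ about $\Omega$ and the reflections in the symmetry axes of $H$ generate the full dihedral symmetry group of order $12$. The reflection exchanging the two endpoints of $S_1$ preserves $S_1$ and fixes $M$. It also fixes $\Omega$, since $\Omega$ is the unique point fixed by all symmetries of $H$. Hence its axis is the geodesic through $\Omega$ and $M$. A reflection that maps the geodesic containing $S_1$ to itself while swapping its two ends has axis orthogonal to that geodesic. Therefore $\Omega M$ is orthogonal to $S_1$.

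For (c), let $V=S_1\cap S_2$ be the vertex adjacent to $M$. The triangle $\Omega M V$ has a right angle at $M$ by (b). Its angle at $V$ is $\pi/4$, because the diagonal $\Omega V$ bisects the right angle of $H$ at $V$. Its angle at $\Omega$ is $2\pi/12=\pi/6$. Its side $MV$ has length $L/2$. For a right triangle with angles $\alpha,\beta$ opposite legs $a,b$, we have $\cos\alpha=\cosh a\sin\beta$. Taking $a=d(\Omega,M)$ opposite the angle $\pi/4$ at $V$, and $\beta=\pi/6$, this gives
$$\cosh d(\Omega,M)=\frac{\cos(\pi/4)}{\sin(\pi/6)}=\sqrt2.$$
Then $d(\Omega,M)=\arcosh\sqrt2=\ln(\sqrt2+1)$, since $\sqrt{2-1}=1$.

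As a consistency check, the same triangle gives $\cosh(L/2)=\cos(\pi/6)/\sin(\pi/4)=\sqrt{3/2}$. Hence $\cosh L=2\cosh^2(L/2)-1=2$, which recovers (a) without the hexagon formula.

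The only delicate point is the symmetry argument in (b): that $H$ is invariant under a dihedral group of order $12$ fixing a common center. This follows because regular right-angled hexagons are unique up to isometry, so any combinatorial symmetry of $H$ is realized by an isometry. After that, (a) and (c) are routine trigonometry.
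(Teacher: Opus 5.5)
Your proof is correct and is essentially the paper's. The paper also deduces (b) from the symmetries of $H$, and it obtains both (a) and (c) from the right triangle with angles $\pi/2,\pi/4,\pi/6$ at $M$, at an endpoint of $S_1$, and at $\Omega$, via $\cosh(L/2)=\cos(\pi/6)/\sin(\pi/4)$ and $\cosh d(\Omega,M)=\cos(\pi/4)/\sin(\pi/6)$. These are exactly your ``consistency check'' and your (c), so your hexagon cosine formula is simply an independent second derivation of (a).

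You should delete the abandoned opening sentence of (a), which is also inaccurate: cutting $H$ along $MM'$ alone produces two right-angled pentagons, not Lambert quadrilaterals.
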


\begin{proof} Assertion (b) follows easily from the symmetries of $H$. 

Let $P$ be one extremity of $S_1$. The triangle 
$P\Omega M$ has angles $\pi/4, \pi_6$ and $\pi/2$.
We use the short notation $PM$ for $d_\HP(P,M)$ and 
$\Omega M$ for $d_\HP(\Omega,M)$. By elementary trigonometry, we have
$$ \cosh(L/2)=\cosh(PM)=
\frac{\cos(\pi/6)}{\sin(\pi/4)}=\frac{\sqrt{6}}{2}$$

\noindent and therefore
$$\cosh(L)=2\cosh^2(L/2)-1=2,$$
which proves the Assertion (a).

Similarly, we obtain
$$
\cosh(\Omega M)=\frac{\cos(\pi/4)}{\sin(\pi/6)}=\sqrt{2}$$
Since 
$$\cosh(\ln(1+\sqrt{2}))= 
\frac{1}{2} (1+\sqrt{2}+(1+\sqrt{2})^{-1})=\sqrt{2}$$
we deduce $d_\HP(\Omega, M)=\ln(1+\sqrt{2}).$
\end{proof}

We observe that 
$$d_\HP(i,(1+ \sqrt{2})i)=\ln(1+\sqrt{2}).$$
By the previous lemma,  there is an embedding of $H$ in $\HP$, such that
\begin{enumerate}
\item[(a)] $H$ is centered around $i$, 
\item[(b)] $(1+\sqrt{2})i$ is the middle of the side $S_1$.
\end{enumerate}

Since $S_1$ and $\R i$ are orthogonal, $S_1$ lies
in the euclidian circle centered at $0$ with euclidian
radius $1+\sqrt{2}$.
It follows that the inversion $r_1$ can be represented by the matrix

$$R_1:=\begin{pmatrix}
0&1+\sqrt{2} \\
 \sqrt{2}-1&0
\end{pmatrix}.$$

\noindent Since $\det R_1<0$, this  representation means that $r_1(z)=\frac{(3+2\sqrt{2})}{\overline z}$.

The hyperbolic rotation of angle $\pi/3$ around $i$ 
is  represented by the matrix

$$G:=\begin{pmatrix}
\frac{\sqrt{3}}{2} & -\frac{1}{2}\\
\frac{1}{2} & \frac{\sqrt{3}}{2}
\end{pmatrix}$$

\noindent Indeed we have $G^6=-1$, that is $G$ has order $6$
in $\PSL_2(\R)$.

 For all $i\in\Z/6\Z$, set  $R_i=G^{1-i}R_1G^{i-1}$. By construction, each $R_i$ is the matrix representation of the inversion $r_i$.  Explicitely we have

$$R_2:=\begin{pmatrix}
\frac{\sqrt{6}}{2}& \frac{2+\sqrt{2}}{2}\\
\frac{-2+\sqrt{2}}{2} &-\frac{\sqrt{6}}{2}
\end{pmatrix},
R_3:=\begin{pmatrix}
\frac{\sqrt{6}}{2}& \frac{2-\sqrt{2}}{2}\\
-\frac{2+\sqrt{2}}{2} &-\frac{\sqrt{6}}{2}
\end{pmatrix}
$$

$$R_4:=\begin{pmatrix}
1-\sqrt{2} & 0\\
0 &-(1+ \sqrt{2})
\end{pmatrix},
R_5:=\begin{pmatrix}
-\frac{\sqrt{6}}{2}& \frac{2-\sqrt{2}}{2}\\
-\frac{2+\sqrt{2}}{2} &\frac{\sqrt{6}}{2}
\end{pmatrix}, \text{ and}$$

$$R_6:=\begin{pmatrix}
-\frac{\sqrt{6}}{2}& \frac{2+\sqrt{2}}{2}\\
-\frac{2-\sqrt{2}}{2} &\frac{\sqrt{6}}{2}
\end{pmatrix}.$$

Let $\widehat{\bf W}$ be the subgroup of $\GL_2(\R)$ generated by  
$R_1,\ldots,R_6$.

\begin{lemma} The subgroup $\widehat{\bf W}\subset\GL_2(\R)$  is a central extension of
$\bf{W}$ by $\{\pm 1\}$. It satisfies
\begin{align}
R_i^2&=1\label{cover1}\\
R_iR_{i+1}&=-R_{i+1}R_i\label{cover2}
\end{align}
\end{lemma}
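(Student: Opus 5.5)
We will verify the two relations directly on the matrices and then use the Coxeter presentation of ${\bf W}$ to obtain the central extension structure.

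\textbf{Relation (\ref{cover1}).} For $R_1$ we compute
$$R_1^2=\begin{pmatrix}(1+\sqrt2)(\sqrt2-1)&0\\0&(\sqrt2-1)(1+\sqrt2)\end{pmatrix}=1,$$
and then $R_i^2=G^{1-i}R_1^2G^{i-1}=1$ for every $i$.

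\textbf{Relation (\ref{cover2}).} Conjugating by $G^{1-i}$ reduces (\ref{cover2}) to the single identity $R_1R_2=-R_2R_1$. We check it directly from the explicit matrices. Since $R_1$ is anti-diagonal, $R_1R_2$ is obtained by swapping the rows of $R_2$ and scaling them, while $R_2R_1$ is obtained by swapping the columns of $R_2$ and scaling them. The diagonal entries of $R_1R_2$ are
$$(1+\sqrt2)\tfrac{-2+\sqrt2}{2}=-\tfrac{\sqrt2}{2},\qquad (\sqrt2-1)\tfrac{2+\sqrt2}{2}=\tfrac{\sqrt2}{2}.$$
The diagonal entries of $R_2R_1$ are their negatives. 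The off-diagonal entries are handled the same way. An alternative, more conceptual check: both $R_1$ and $R_2$ have trace $0$ and determinant $-1$, so each anticommutes with every traceless matrix orthogonal to it. The relation then reduces to $\Tr(R_1R_2)=0$, which encodes the right angle between the geodesics of $S_1$ and $S_2$.

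\textbf{Central extension.} Consider the projection $p:\GL_2(\R)\to\PGL_2(\R)$. It sends $R_i$ to $r_i$, so $p(\widehat{\bf W})={\bf W}$, and $\ker p\cap\widehat{\bf W}\subset\R^*\cap\widehat{\bf W}$.
\begin{enumerate}
\item[(i)] Every element of $\widehat{\bf W}$ has determinant $\pm1$, so scalars in $\widehat{\bf W}$ lie in $\{\pm1\}$.
\item[(ii)] $-1=(R_1R_2)^2$ belongs to $\widehat{\bf W}$, by (\ref{cover1}) and (\ref{cover2}).
\end{enumerate}
Hence $\ker p\cap\widehat{\bf W}=\{\pm1\}$, which is central. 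This gives the exact sequence
$$1\to\{\pm1\}\to\widehat{\bf W}\to{\bf W}\to1.$$

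\textbf{Main obstacle.} The only real work is the sign bookkeeping in the check of $R_1R_2=-R_2R_1$. We will do this check explicitly, possibly cross-checked by machine as the paper does for its other matrix computations. The relations $(r_ir_{i+1})^2=1$ in the Coxeter presentation of ${\bf W}$ then lift to $(R_iR_{i+1})^2=-1$, consistent with the extension being non-split on these dihedral subgroups.
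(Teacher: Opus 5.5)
Your proposal is correct and follows the same route as the paper. You check $R_1^2=1$ and $R_1R_2=-R_2R_1$ directly, obtain all the other relations by conjugating with powers of $G$, and deduce the extension structure from the projection onto ${\bf W}\subset\PGL_2(\R)$. Your determinant argument together with $(R_1R_2)^2=-1$ spells out that the kernel of this projection is exactly $\{\pm1\}$, a point the paper leaves implicit.
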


\begin{proof} By definition, $R_i$ is a matrix representation of the reflection along
the side $S_i$ of $H$. Thus the image of $\widehat{W}$ in $\PGL_2(\R)$ is the Coxeter group 
$\bf {W}$, which implies that $\widehat{\bf W}$  is a central extension of
$\bf{W}$.

It is clear that $R_1^2=1$ and an explicit computation shows that 
$R_1R_2=-R_2R_1$. Formulas \ref{cover1} and \ref{cover2} are obtained by conjugation by $G$.
\end{proof}

\subsection{The embedding $\Pi\subset \SL_2(\R)$}

Set $A_i:=R_{i-1}R_{i+1}$ for any $i\in\Z/6\Z$. Since $\det R_i=-1, \forall i\in\Z/6\Z$,
we deduce that all matrices $A_i$ are in $\SL_2(\R)$.

\begin{lemma} The map $a_i\mapsto A_i$ extends to a group isomorphism

 $$\Pi\simeq \langle A_i\mid i\in\Z/6\Z\rangle\subset \SL_2(\R).$$
\end{lemma}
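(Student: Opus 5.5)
The plan is to define the homomorphism on the presented group of Lemma \ref{relations}, and then check that the natural projection to $\PSL_2(\R)$ undoes it.

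\emph{Step 1: the relations lift.} By Lemma \ref{relations}, $\Pi$ is the group generated by $a_1,\ldots,a_6$ subject only to Relations (\ref{free1}), (\ref{free2}) and (\ref{relation}). Hence $a_i\mapsto A_i$ extends to a homomorphism $\phi:\Pi\to\SL_2(\R)$ as soon as
$$A_1A_3A_5=1,\quad A_2A_4A_6=1,\quad A_1A_2A_3A_4A_5A_6=1.$$
I would verify these with Relations (\ref{cover1}) and (\ref{cover2}) only, not with the explicit matrices. For instance,
$$A_1A_3A_5=R_6R_2\,R_2R_4\,R_4R_6=R_6R_6=1$$
by (\ref{cover1}), and $A_2A_4A_6=1$ in the same way.

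The six-fold product is $R_6R_2R_1R_3R_2R_4R_3R_5R_4R_6R_5R_1$. I would reduce it by repeatedly swapping adjacent indices with (\ref{cover2}), each swap contributing a factor $-1$. Non-adjacent reflections need not anticommute, so the reordering must only ever use swaps of indices $i,i+1$. The main obstacle is this sign bookkeeping: the image of the product in $\PSL_2(\R)$ is trivial by the relation in $\Pi$, so the product is $\pm1$, and one must show it is $+1$ (this is the Bers-type phenomenon flagged in the Conventions section).

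The reduction I would try first is to move $R_1$ leftward through $R_2$ and cancel it with nothing. If that becomes messy, I would simply compute the product with the explicit matrices $R_1,\ldots,R_6$, or reduce it to a computation with $G$ and $R_1$ using $R_i=G^{1-i}R_1G^{i-1}$. Since $G^6=-1$, that parity count is exactly where the sign is decided.

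\emph{Step 2: $\phi$ is injective with the stated image.} Let $p:\SL_2(\R)\to\PSL_2(\R)$ be the projection. By construction $p(A_i)=r_{i-1}r_{i+1}=a_i$, viewing $\Pi\subset\Isom^+(\HP)\simeq\PSL_2(\R)$. Thus $p\circ\phi$ agrees with the inclusion on the generators, so $p\circ\phi$ is the identity embedding of $\Pi$. In particular $\phi$ is injective, and its image is by definition $\langle A_i\mid i\in\Z/6\Z\rangle$. This gives the claimed isomorphism.

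As a byproduct, $-1\notin\phi(\Pi)$, since $p$ is injective on $\phi(\Pi)$. This is consistent with the later statement $\cH^*=\Pi\times\{\pm1\}$.
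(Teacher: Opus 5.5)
Your overall structure is right, and your Step 2 is more careful than the paper. The paper only checks the relations and then asserts the isomorphism. Your observation that $p\circ\phi$ is the inclusion $\Pi\hookrightarrow\PSL_2(\R)$ is exactly what makes the surjection $\Pi\to\langle A_i\rangle$ injective.

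The gap is in Step 1. The triple-product relations are trivial, as you show. The only substantive content of the lemma is that $A_1A_2\cdots A_6$ equals $+1$ rather than $-1$. You correctly isolate this as the crux, but you never settle it. Your suggested first move (``move $R_1$ leftward through $R_2$ and cancel it with nothing'') does not describe a reduction that terminates. The fallbacks, multiplying out the explicit matrices or rewriting everything via $G$ and $R_1$, are announced but not carried out. As written, nothing in the proposal rules out $A_1\cdots A_6=-1$. In that case $a_i\mapsto A_i$ would violate Relation (\ref{relation}) and would not extend to a homomorphism $\Pi\to\SL_2(\R)$ at all.

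The missing observation, which is the one the paper uses, is that two adjacent anticommutations cancel. From (\ref{cover2}) you get
$$R_iR_{i-1}R_{i+1}=-R_{i-1}R_iR_{i+1}=R_{i-1}R_{i+1}R_i.$$
So $R_i$ commutes, with no sign, with $R_{i-1}R_{i+1}$ and hence with its inverse $R_{i+1}R_{i-1}$. Using this together with (\ref{cover1}), the word collapses without any sign ever appearing:
\begin{align*}
A_1\cdots A_6&=R_6\,(R_2R_1R_3R_2)\,R_4R_3R_5R_4R_6R_5R_1=R_6R_1R_3\,(R_4R_3R_5R_4)\,R_6R_5R_1\\
&=R_6R_1R_3R_3R_5R_6R_5R_1=(R_6R_1R_5R_6)\,R_5R_1=R_1R_5R_5R_1=1.
\end{align*}
Here the last bracket uses that $R_6$ commutes with $R_1R_5$, the case $i=6$ with indices read modulo $6$. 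With this computation inserted in Step 1, your argument is complete.
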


\begin{proof} It follows from Relation \ref{cover1} that 
$A_1A_3A_5=1$ and $A_2A_4A_6=1$.

We have 

$$A_1A_2A_3A_4A_5A_6=(R_6R_2)(R_1R_3)(R_2R_4)(R_3R_5)(R_4R_6)(R_5 R_1)$$

We observe that $R_i$ commutes with $R_{i-1}R_{i+1}$.
Using that 
\begin{align*}
R_i(R_{i-1}1R_{i+1})R_i&=R_{i-1}1R_{i+1}\\
R_i^2&=1
\end{align*}

 we deduce

\begin{align*}
A_1A_2A_3A_4A_5A_6&=(R_6R_2)(R_1R_3)(R_2R_4)(R_3R_5)(R_4R_6)(R_5 R_1)\\
&=R_6\underline{R_2R_1R_3R_2}R_4R_3R_5R_4R_6R_5 R_1\\
&=R_6R_1R_3\underline{R_4R_3R_5R_4}R_6R_5 R_1\\
&=R_6R_1\underline{R_3R_3}R_5R_6R_5 R_1 \\
&=\underline{R_6R_1R_5R_6}R_5 R_1\\
&=R_1\underline{R_5R_5} R_1\\
&=\underline{R_1^2}\\
&=1
\end{align*}

\noindent where, in each line of the previous equation, we have underlined the subword used to
simplify the expression. Thus, by Lemma \ref{relations}, 
the  subgroup $\langle A_i\mid i\in\Z/6\Z\rangle$ is isomorphic to
$\Pi$.
\end{proof}

\section{The explicit quaternion ring of $S$}

The surface $\HP/\Pi$ has been investigated 
in \cite{KuuNaa} who have noticed that this surface is arithmetic. In fact, this follows also from
\cite{Takeuchi}. In this section, we describe the 
corresponding quaternion ring.

\subsection{Explicit expressions for the $A_i$.}

Since $A_i=R_{i-1}R_{i+1}$, we deduce from the
formulas for $R_i$ that:

$$A_1=\begin{pmatrix}
-2 &-\sqrt{6}-\sqrt{3}\\
-\sqrt{6}+\sqrt{3}&-2
\end{pmatrix},
A_2=\begin{pmatrix}
-2-\frac{3\sqrt{2}}{2}&-\frac{\sqrt{6}}{2}-\sqrt{3}\\
-\frac{\sqrt{6}}{2}+\sqrt{3}&-2+\frac{3\sqrt{2}}{2}\end{pmatrix}$$

$$A_3=\begin{pmatrix}
-2-\frac{3\sqrt{2}}{2}&\frac{\sqrt{6}}{2}-\sqrt{3}\\
\frac{\sqrt{6}}{2}+\sqrt{3}&-2+\frac{3\sqrt{2}}{2}\end{pmatrix},
A_4=\begin{pmatrix}
-2 &\sqrt{6}-\sqrt{3}\\
\sqrt{6}+\sqrt{3}&-2
\end{pmatrix}$$

$$A_5=\begin{pmatrix}
-2+\frac{3\sqrt{2}}{2}&\frac{\sqrt{6}}{2}-\sqrt{3}\\
\frac{\sqrt{6}}{2}+\sqrt{3}&-2-\frac{3\sqrt{2}}{2}\end{pmatrix},
A_6=\begin{pmatrix}
-2+\frac{\sqrt{2}}{2} &-\frac{\sqrt{6}}{2}-\sqrt{3}\\
-\frac{\sqrt{6}}{2}+\sqrt{3}&-2-\frac{\sqrt{2}}{2}
\end{pmatrix}$$

Let $\Quat(3,3)$ be the quaternion algebra over $\R$ with presentation

$$\langle I, J\mid I^2=3, J^2=3, IJ+JI=0\rangle.$$

\begin{lemma}\label{quaternion} We have
\begin{enumerate}
\item[(a)] $(2+A_1)^2=3$,
\item[(b)] $(2+A_2)^2=3$, and
\item[(c)] $(2+A_1)(2+A_2)+(2+A_2)(2+A_1)=0$.
\end{enumerate}
Consequently, there is an isomorphism 
$\Mat_2(\R)\simeq \Quat(3,3)$ relative to which
$I=2+A1$ and $J=2+A2$.
\end{lemma}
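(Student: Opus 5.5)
The plan is to avoid multiplying out the squares directly and instead use the Cayley--Hamilton theorem for $2\times 2$ matrices. For every $X\in\Mat_2(\R)$ we have $X^2-\Tr(X)X+\det(X)=0$. Polarizing this identity gives, for all $X,Y\in\Mat_2(\R)$,
$$XY+YX=\Tr(X)Y+\Tr(Y)X+\big(\Tr(XY)-\Tr(X)\Tr(Y)\big).$$
In particular, if $X$ and $Y$ are traceless, then $X^2=-\det X$ and $XY+YX=\Tr(XY)$. Both facts reduce everything to trace and determinant computations.

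For (a) and (b), read off the traces from the explicit matrices: $\Tr A_1=-4$ and $\Tr A_2=(-2-\tfrac{3\sqrt2}{2})+(-2+\tfrac{3\sqrt2}{2})=-4$. Since $\det A_i=1$, Cayley--Hamilton gives $A_i^2+4A_i+1=0$ for $i=1,2$. Hence
$$(2+A_i)^2=A_i^2+4A_i+4=3.$$
This proves (a) and (b), and it also shows that $I:=2+A_1$ and $J:=2+A_2$ are traceless.

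For (c), apply the traceless identity to $I$ and $J$: $IJ+JI=\Tr(IJ)$. Expanding,
$$\Tr(IJ)=8+2\Tr A_1+2\Tr A_2+\Tr(A_1A_2)=\Tr(A_1A_2)-8,$$
so (c) is equivalent to $\Tr(A_1A_2)=8$. I would verify this by computing the two diagonal entries of $A_1A_2$ from the explicit matrices. This is the only genuine computation, and it is the step where an error in the matrices would surface. One possible cross-check is to write $A_1A_2=R_6R_2R_1R_3$ and use $R_1R_2=-R_2R_1$, but the direct computation is simplest.

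For the final assertion, the relations (a)--(c) define a homomorphism $\Quat(3,3)\to\Mat_2(\R)$ sending $I\mapsto 2+A_1$ and $J\mapsto 2+A_2$. The algebra $\Quat(3,3)$ is central simple of dimension $4$ over $\R$, so this homomorphism is injective. Since $\dim\Mat_2(\R)=4$, it is an isomorphism. Alternatively, one checks directly that $1,I,J,IJ$ are linearly independent: the anticommutation relation together with the invertibility of $I$ and $J$ forces any linear relation among them to be trivial.
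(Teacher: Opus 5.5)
Your proposal is correct and follows the paper's route. The paper simply asserts (a)--(c) ``by direct computation'' and then concludes exactly as you do, from the simplicity of $\Quat(3,3)$; you usefully make explicit the dimension count that the paper leaves implicit. Your Cayley--Hamilton reduction is an economical way of carrying out that computation: (a) and (b) follow from $\Tr A_i=-4$ and $\det A_i=1$, and (c) reduces to $\Tr(A_1A_2)=8$. That identity does hold, since the diagonal entries of $A_1A_2$ are $4\pm\frac{3\sqrt{2}}{2}$.
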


\begin{proof} The Assertions (a), (b) and (c) follow from direct computation. Consequently, there is an algebra homomorphism
$\Quat(3,3)\to \Mat_2(\R)$ sending $I$ to $(2+A_1)$ and
$J$ to $(2+A_2)$. Since the quaternion algebra $\Quat(3,3)$ is simple, this algebra homomorphism is an isomorphism.
\end{proof}

As usual, set $K:=IJ$. Let $\cH\subset \Quat(3,3)$ be the subring

$$\cH:=\Z\oplus\Z I\oplus\Z J\oplus \Z K.$$

Using the explicit formulas for the matrices $A_i$, we easily deduce the following lemma:

\begin{lemma} Relative to the isomorphism of the previous lemma, we have:

$A_1=-2+I$

$A_2=-2+J$

$A_3=-2-2I+K$

$A_4=-2-3I-2J+2K$

$A_5=-2-2I-3J+2K$

$A_6=-2-2J+K$

In particular $\Pi$ lies in $\cH$.
\end{lemma}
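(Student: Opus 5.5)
The plan is to make everything explicit in $\Mat_2(\R)$ and compare coefficients. By Lemma \ref{quaternion}, the matrices $1$, $I=2+A_1$, $J=2+A_2$ and $K=IJ$ form an $\R$-basis of $\Mat_2(\R)$. So each $A_i$ has a unique expression $\alpha+\beta I+\gamma J+\delta K$, and it is enough to check the six claimed identities as $2\times 2$ matrix identities.

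The formulas for $A_1$ and $A_2$ are tautological, since they are just the definitions of $I$ and $J$. For the others, I first compute $K=(2+A_1)(2+A_2)$ explicitly from the matrices listed above; its entries lie in $\Z[\sqrt2,\sqrt3,\sqrt6]$. Then for each of $A_3,\dots,A_6$ I form the proposed combination, for instance $-2-2I+K$ for $A_3$, and compare it entrywise with the given matrix. This is the fastidious part, which the paper checks with Python.

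To guard against sign or arithmetic slips, I would run three independent consistency checks before the entrywise comparison.
\begin{itemize}
\item The reduced trace of $\alpha+\beta I+\gamma J+\delta K$ is $2\alpha$. It must equal $\Tr A_i=-4$, and indeed every proposed expression has $\alpha=-2$.
\item The reduced norm $\alpha^2-3\beta^2-3\gamma^2+9\delta^2$ must equal $\det A_i=1$. For example, for $A_4$ this gives $4-27-12+36=1$, and for $A_3$ it gives $4-12+9=1$.
\item Using only the multiplication table
\[
I^2=J^2=3,\quad K^2=-9,\quad IJ=-JI=K,\quad IK=-KI=3J,\quad JK=-KJ=-3I,
\]
one checks the relations $A_1A_3A_5=1$ and $A_2A_4A_6=1$ inside $\Quat(3,3)$.
\end{itemize}
Together with the trace and norm, these relations also pin down $A_5$ as $(A_1A_3)^{-1}$ and $A_6$ as $(A_2A_4)^{-1}$. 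So in practice only $A_3$ and $A_4$ need a direct matrix comparison.

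For the final assertion, the multiplication table has integer structure constants, so $\cH=\Z\oplus\Z I\oplus\Z J\oplus\Z K$ is a subring. Each $A_i$ has reduced norm $1$, so its inverse is its quaternionic conjugate $\alpha-\beta I-\gamma J-\delta K$, which again lies in $\cH$. Since $\Pi$ is generated by $A_1,\dots,A_6$, it follows that $\Pi\subset\cH^*$.

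The main obstacle is purely computational: getting $K$ right, with its $\sqrt2,\sqrt3,\sqrt6$ entries, and matching $A_3$ and $A_4$. The trace, norm and relation checks are the safeguard I would rely on there.
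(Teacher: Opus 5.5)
Your proposal is correct and is essentially the paper's own argument, namely a direct matrix computation in the basis $1,I,J,K$. Your shortcut of computing $A_1A_3=-2+2I+3J-2K$ and $A_2A_4=-2+2J-K$ in $\Quat(3,3)$ and inverting by conjugation is sound, yields exactly the stated $A_5$ and $A_6$, and is worth keeping: the paper's printed $A_6$ should have diagonal entries $-2\pm\frac{3\sqrt{2}}{2}$ (as printed its determinant is $5$), and its printed $R_4$ should be $\begin{pmatrix}0&1-\sqrt{2}\\-(1+\sqrt{2})&0\end{pmatrix}$, so a naive entrywise comparison against the printed $A_6$ would fail.
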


The reduced  trace $\Tr h$ and the reduced determinant $\det h$ of an arbitrary element  $h=a+BI+cJ+dK\in\cH$ are given by

$$\Tr h= 2a,$$

$$\det h= a^2 -3b^2 -3 c^2 +9d^2.$$

\begin{lemma} \label{H*}

\begin{enumerate}
\item[(a)] We have $\det h=1$  for any $h\in\cH^*$.
\item[(b)] $\cH^*$ contains neither elliptic elements nor
parabolic elements.
\item[(c)] In particular, $\Q\otimes\cH$ is not split.
\end{enumerate}

\end{lemma}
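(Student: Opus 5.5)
We argue through the reduced determinant, which is multiplicative, $\det(hh')=\det h\det h'$, since it is the reduced norm $h\bar h$ with $\bar h=a-bI-cJ-dK$. If $h\in\cH^*$ then $h$ and $h^{-1}$ both lie in $\cH$, so $\det h$ and $\det h^{-1}$ are integers with product $1$, and $\det h=\pm1$. For assertion (a) it remains to exclude $\det h=-1$. We reduce the equation $a^2-3b^2-3c^2+9d^2=-1$ modulo $3$: it gives $a^2\equiv -1\equiv 2 \mod 3$, and $2$ is not a square mod $3$. Hence $\det h=1$.

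For (b), let $h=a+bI+cJ+dK\in\cH^*$. By (a) and the isomorphism of Lemma \ref{quaternion}, $h$ is an element of $\SL_2(\R)$ with trace $\Tr h=2a\in 2\Z$. An element of $\SL_2(\R)$ other than $\pm1$ is elliptic if $|\Tr h|<2$ and parabolic if $|\Tr h|=2$. So we must show $|a|\geq 2$ unless $h=\pm1$.

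\emph{Case $a=0$.} The determinant condition becomes $-3b^2-3c^2+9d^2=1$, which is impossible modulo $3$.

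\emph{Case $a=\pm1$.} Then $3(b^2+c^2)=9d^2$, that is $b^2+c^2=3d^2$. The standard descent modulo $3$ finishes this case. Since $-1$ is not a square mod $3$, the congruence $b^2+c^2\equiv 0\mod 3$ forces $3\mid b$ and $3\mid c$. Then $9\mid 3d^2$, so $3\mid d$, and we can divide through by $3$. Infinite descent gives $b=c=d=0$, so $h=\pm1$, which is neither elliptic nor parabolic.

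Hence every nontrivial $h\in\cH^*$ other than $\pm1$ has $|\Tr h|\geq 4>2$ and is hyperbolic.

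For (c), we argue by contradiction. Suppose $\Q\otimes\cH\simeq \Mat_2(\Q)$. Then the anisotropy fails: the norm form $a^2-3b^2-3c^2+9d^2$ would represent $0$ nontrivially over $\Q$, equivalently the pure-quaternion ternary form $x^2+y^2-3z^2$ (from $-3b^2-3c^2+9d^2=0$) would have a nontrivial rational zero. By the same descent modulo $3$ as in the case $a=\pm1$, the form $b^2+c^2=3d^2$ has no nonzero integer solution. Therefore no nonzero element of $\Q\otimes\cH$ has norm $0$; every nonzero element is invertible, and $\Q\otimes\cH$ is a division algebra, hence not split.

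Alternatively, (c) follows from (b). A split order would contain a unipotent, i.e. parabolic, unit of $\cH^*$, namely some $1+N$ with $N$ nilpotent in the order, and (b) rules this out.

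The only genuinely non-routine step is the descent for $b^2+c^2=3d^2$, and it is standard.
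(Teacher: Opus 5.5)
Your proof is correct and follows the paper's argument. The congruence $\det h\equiv a^2 \mod 3$ gives (a) and excludes $a=0$, and the mod-$3$ descent on $b^2+c^2=3d^2$ excludes trace $\pm 2$ for $h\neq\pm1$. For (c), the paper uses exactly your ``alternative'' (a split algebra would produce a parabolic unit), while your primary anisotropy argument is a valid restatement of the same descent.
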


\begin{proof} Let $h=a+bI+cJ+dK$ be an invertible element
in $\cH$. 

We have  $\det h=\pm1$. Since 
$\det h=a^2-3b^2-3c^2+9d^2$, we have

$$\det h\equiv a^2 \mod 3$$

\noindent which implies that $\det h=1$, proving Assertion (a).

We also observe that $a\neq 0$. Since $\Tr\, h=2a$, we
conclude that $\mid\Tr\, h\mid\geq 2$ therefore
$h$ is not elliptic.

We observe that the equation
$-b^2-c^2+3 d^2=0$ admits no nontrivial integer solution.
Otherwise we can pick a primitive solution, that is a solution
with $\gcd(a,b,c)=1$. The equation
$b^2+c^2\equiv 0\mod 3$ implies that $b$ and $c$ are divisible by $3$.
Therefore $3 d^2=a^2+b^2$ is divisible by $9$, thus $c$ is also divisible by
$3$ which contradicts that $\gcd(b,c,d)=1$. Therefore
any element $h$ with $\Tr(h)=\pm 2$ is equal to $\pm1$,
which means that $\cH^*$ contains no parabolic elements,
which completes the proof of Assertion (b).

The last assertion follows from the fact that  $\cH^*$ contains no parabolic elements.
\end{proof}

\begin{cor} \label{Bers}We have
$$\cH^*=\{ \pm1 \}\times \Pi$$
\end{cor}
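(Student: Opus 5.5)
The natural route is a covolume comparison. Since $\Pi\subset\cH$ consists of invertible elements (each $A_i$ has determinant $1$ and integral coordinates, so its inverse $\overline{A_i}$ also lies in $\cH$), we have $\Pi\subset\cH^*$. Also $-1\notin\Pi$: $\Pi$ acts freely and $-1$ acts trivially on $\HP$, and more to the point $\Pi$ injects into $\PSL_2(\R)$ by construction. Hence $\{\pm1\}\times\Pi\subset\cH^*$ is a subgroup, and it remains to show equality.

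\emph{Step 1: $\cH^*$ acts properly discontinuously and freely modulo $\pm1$.} By Lemma \ref{H*}(a), $\cH^*\subset\SL_2(\R)$. It is discrete, because $\cH$ is a lattice in $\Quat(3,3)\simeq\Mat_2(\R)$. By Lemma \ref{H*}(b), no element other than $\pm1$ is elliptic. Therefore the image $\overline{\cH^*}$ of $\cH^*$ in $\PSL_2(\R)$ is a discrete torsion-free group acting freely on $\HP$. It contains $\Pi$, so $\HP/\overline{\cH^*}$ is a closed surface covered by $S$ with degree $d=[\overline{\cH^*}:\Pi]$.

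\emph{Step 2: compare Euler characteristics.} $S$ has genus $2$, so $\chi(S)=-2$. The quotient $\HP/\overline{\cH^*}$ is a closed orientable hyperbolic surface (since $\cH^*\subset\SL_2$, it is orientation preserving), so its Euler characteristic is a negative even integer $\chi'\le -2$. From $-2=d\chi'$ we get $d=1$. Hence $\overline{\cH^*}=\Pi$, that is $\cH^*\subset\{\pm1\}\cdot\Pi$. Combined with $-1\notin\Pi$, this gives $\cH^*=\{\pm1\}\times\Pi$, a direct product because $-1$ is central.

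The main subtlety is Step 1: checking that no nontrivial element of $\overline{\cH^*}$ has a fixed point. This is exactly the content of Lemma \ref{H*}(b): elliptic elements are excluded because the trace $2a$ with $a\neq0$ satisfies $|\Tr|\ge2$, and $\Tr=\pm2$ forces $h=\pm1$. No parabolic elements occur, so no cusp issues arise; in any case the quotient is compact, being covered by the compact surface $S$. Torsion-freeness then also follows, which makes the Euler characteristic argument valid. If one prefers to avoid the covering argument, an alternative is to note that the orbifold $\HP/\overline{\cH^*}$ has area $4\pi/d$. Since it is a genuine closed surface, Gauss–Bonnet forces the area to be at least $4\pi$, hence $d=1$.
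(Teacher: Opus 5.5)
Your proposal is correct and follows the paper's argument. Lemma \ref{H*} makes the image of $\cH^*$ in $\PSL_2(\R)$ a discrete torsion-free group containing $\Pi$, and multiplicativity of the Euler characteristic (the paper's Hurwitz formula $2-1=[\cH^*/\{\pm1\}:\Pi](g-1)$) then forces the index to be one. You also spell out points the paper leaves implicit, all correctly: $\Pi\subset\cH^*$; $-1\notin\Pi$, because $\Pi$ maps injectively to $\PSL_2(\R)$; and the quotient is compact of finite index, because it is covered by $S$.
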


\begin{proof} 
It follows from Lemma \ref{H*} that $\cH^*/\{\pm1\}$ is isomorphic to
the fundamental group of a closed oriented surgface of genus $g$ for a certain $g\geq 2$.
By Hurwitz formula applied to the Galois cover
$\HP/\Pi\to \HP/\cH^*$, we have

$$2-1 =[\cH^*/\{\pm1\}:\Pi] (g-1),$$

\noindent which implies that $\cH^*/\{\pm1\}=\Pi$. The claim follows.
\end{proof}

\subsection{Cardinality of the  group $\Gamma_q$}

For an integer $q\geq 2$, set
$\cH_q=\cH/q\cH$ and

$$\Gamma_q=\{h\in\cH_q\mid \det h=1\mod\,q\}.$$

\noindent In the subsection, we determine the cardinality of the group $\Gamma_q$, i.e. 
the number of solutions of the equation in the unknowns
$a, b, c, d\in \Z/q\Z$

\begin{align}
a^2-3b^2-3c^2+9d^2&=1 \mod\, q.\label{normeq}
\end{align}

We start with the  case $q=2^n$. In order to avoid
repetitions in the proof, we consider 
the more general equation in the unknowns
$x_1, x_2, x_3, x_4\in \Z/q\Z$

\begin{align}
a_1x_1^2+a_2x_2^2+a_3x_3^2+a_4x_4^2=1\,\mod\,q,\label{normeq2}
\end{align}

\noindent where $a_1,\ldots, a_4$ are four 
given integers congruent to $1$ modulo $4$.
Let $\Sol(q)$ be the set of solutions of Equation \ref{normeq2}
and let $\theta:\Sol(2q)\mapsto \Sol(q)$ be the natural map
sending $(x_1,x_2,x_3,x_4) to (x_1,x_2,x_3,x_4) \text{ modulo } 2q$.

\begin{lemma} \label{sol2} We have
$$\Card\Sol(q)=q^3.$$
\end{lemma}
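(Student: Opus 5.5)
The plan is to prove $\Card\Sol(q)=q^3$ for $q=2^n$ by induction on $n$, comparing $\Sol(2q)$ with $\Sol(q)$ through the reduction map $\theta$. Write
$$f(x)=a_1x_1^2+a_2x_2^2+a_3x_3^2+a_4x_4^2.$$

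\textbf{Base cases.} For $q=2$ the equation reads $x_1+x_2+x_3+x_4\equiv 1 \mod 2$, which has $8=2^3$ solutions. For $q=4$, every square is $0$ or $1$ modulo $4$, and $a_i\equiv 1 \mod 4$, so $f(x)$ is congruent modulo $4$ to the number of odd $x_i$. Hence exactly one coordinate is odd. This gives $4\cdot 2\cdot 2^3=64=4^3$ solutions.

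\textbf{Key observation for the induction step ($q\geq 4$).} The class of $f(x)$ modulo $2q$ depends only on $x$ modulo $q$. Indeed,
$$a_i(x_i+qy_i)^2=a_ix_i^2+2qa_ix_iy_i+q^2a_iy_i^2,$$
and both extra terms vanish modulo $2q$. Consequently:
\begin{enumerate}
\item[(a)] each $x\in\Sol(q)$ has either all $2^4=16$ of its lifts modulo $2q$ in $\Sol(2q)$, or none of them;
\item[(b)] the first case happens exactly when $f(x)\equiv 1 \mod 2q$; otherwise $f(x)\equiv 1+q \mod 2q$.
\end{enumerate}
It therefore suffices to show that exactly half of $\Sol(q)$ satisfies $f(x)\equiv 1 \mod 2q$. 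Then
$$\Card\Sol(2q)=16\cdot\tfrac12\, q^3=(2q)^3.$$

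\textbf{The halving step, which I expect to be the main obstacle.} I would build an involution on $\Sol(q)$ that swaps the two residues $1$ and $1+q$ modulo $2q$. It has to be well defined modulo $q$ and chosen canonically, so that it is really an involution.

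\emph{Case $q\geq 8$.} Some coordinate is odd, since $f(x)\equiv 1 \mod 2$. Let $i$ be the first index with $x_i$ odd, and replace $x_i$ by $x_i+q/2$. This is well defined modulo $q$, preserves the parity of every coordinate (as $q/2$ is even), so the same index $i$ is selected again, and applying it twice gives the identity. The change in $f$ is
$$a_i\bigl(qx_i+q^2/4\bigr).$$
Here $qx_i\equiv q \mod 2q$ because $x_i$ is odd, and $q^2/4=q\cdot(q/4)\equiv 0 \mod 2q$ because $q/4$ is even. Since $a_i$ is odd, the total change is $\equiv q \mod 2q$, so the two classes are swapped.

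\emph{Case $q=4$.} This shift fails, because $q^2/4=4\not\equiv 0 \mod 8$. Instead I use the base-case structure: a solution modulo $4$ has exactly one odd coordinate, so even coordinates exist. Let $j$ be the first index with $x_j$ even, and shift $x_j$ by $2$. Then
$$(x_j+2)^2-x_j^2=4x_j+4\equiv 4 \mod 8,$$
so $f$ changes by $4a_j\equiv 4=q \mod 8$. The index $j$ is again canonical, since parities are unchanged.

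These two cases give the halving, and the induction closes.
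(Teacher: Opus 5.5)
Your proof is correct, and it follows a different route from the paper's.

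\textbf{The paper's route.} The paper does not induct. For $q=2^n$ with $n\geq 3$, it uses the analysis modulo $4$ to see that exactly one coordinate of a solution is odd. It splits $\Sol(q)$ into four pieces $\Sol_i(q)$ according to which coordinate is odd, and counts $\Sol_1(q)$ directly. Among the $(q/2)^3$ even triples $(x_2,x_3,x_4)$, exactly half make $a_1^{-1}(1-a_2x_2^2-a_3x_3^2-a_4x_4^2)$ congruent to $1$ modulo $8$, because $x^2\equiv 2x$ modulo $8$ for even $x$. Each such odd residue has exactly four square roots modulo $2^n$, which gives $\Card\Sol_1(q)=q^3/4$. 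So the paper relies on the structure of squares in $(\Z/2^n\Z)^*$. In exchange it gets a slightly finer statement: solutions are equidistributed over the position of the unique odd coordinate.

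\textbf{Your route.} Your Hensel-type argument uses that $f$ modulo $2q$ depends only on $x$ modulo $q$. Hence $\Card\Sol(2q)$ is $16$ times the number of solutions modulo $q$ that lift. Your parity-preserving involution then shows that exactly half of them lift. This needs no information about square roots modulo $2^n$. The cost is the separate treatment of the step $4\to 8$, where the shift by $q/2$ must be applied to an even coordinate rather than an odd one.

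\textbf{What your route buys.} For $q\geq 8$ your step uses only that the $a_i$ are odd. It therefore shows $\Card\Sol(2q)=8\,\Card\Sol(q)$ for arbitrary odd coefficients. The hypothesis $a_i\equiv 1$ modulo $4$ is needed only to obtain the starting values $\Card\Sol(4)=4^3$ and $\Card\Sol(8)=8^3$.
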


\begin{proof}
We first consider the case $q=2^n$ with $n\leq 2$.
We observe that the equation \ref{normeq2}, namely 
$x_1^2+x_2^2+x_3^2+x_4^2=0$, admits a symmetry of order $4$. 

\begin{enumerate}
\item[(a)] For $n=1$, the only solutions, up to the $\Z/4\Z$-symmetry, are
$1,0,0,0)$ and $(1,1,1,0)$, thus
  $\Card\Sol(2)=8=2^3.$
  
\item[(b)]
For $n=2$, we observe that the square modulo four
are $0$ and $1$. Thus, up to the $\Z/4\Z$-symmetry, the only solutions
are $(\pm1, 2y_2, 2y_3, 2 y_4)$ where $y_i=0$ or $1$.
Thus  $\Card\Sol(4)=4\times 16=4^3.$
\end{enumerate}

We now consider the case $q=2^n$ for $n\geq 3$. 
Let $(x_1,x_2,x_3,x)\in 
\Sol(q)$.
By the previous consideration, exactly one
coordinate $x_i$ is odd. Thus 
$\Sol(q)$ is the disjoint union:

$$\Sol(q)=\Sol_1(q)\cup\Sol_2(q)\cup\Sol_3(q)\cup\Sol_4(q)$$

\noindent where 

$$\Sol_i(q):=\{(x_1,x_2,x_3,x)\in 
\Sol(q)\mid\,x_i\,\text{is odd and } x_j \text{ is even for}
j\neq i\}.$$

We will now compute $\Card\Sol_1(q)$. We observe
that an odd element $x\in \Z/q\Z$ is a square
if and only if $q\equiv 1\mod 8$. Furthermore,  when the condition is satisfied, $x$ admits exactly four square roots.

Next, when $x$ is even, we have $x^2\equiv 2 x\mod 8$.
Thus $\Sol_1(q)$ is the set of $4$-uples 
$(x_1,x_2,x_3,x)\in\Z/q\Z^4$ satisfying

\begin{enumerate}
\item[(a)] $x_2, x_3$ and $x_4$ are even,
\item[(b)] $1/a_1\big(1-2(a_2x_2+a_3x_3+a_4x_4)\big)\equiv 1\mod 8$
\item[(c)] $x_1$ is a square root of 
$1/a_1\big(1-2(a_2x_2+a_3x_3+a_4x_4)\big)$.
\end{enumerate}

There are $(q/2)^3$ even triples $(x_2, x_3,x_4)$, and
the quantity 
$1/a_1\big(1-2(a_2x_2+a_3x_3+a_4x_4)\big)$ is congruent  to $1$ or $5$ modulo $8$. Thus there are $1/2 (q/2)^3$
triple $(x_2, x_3,x_4)$ satisfying (a) and (b). We deduce

$$\Card\Sol_1(q)=4\times 1/2(q/2)^3=q^3/4.$$

Similarly, $\Card\Sol_i(q)=q^3/4$ for $i=2,3, 4$, which proves that 
$$\Card\Sol(q)=q^3.$$
\end{proof}

Recall that an integer $q$ of the form $q=p^n$, where 
$p$ is prime and $n\geq 1$ is called a
{\it prime power}.

\begin{lemma}\label{solq} Let $q=p^n$ be a prime power

\begin{enumerate}
\item[(a)] If $p=2$, then $\Card \Gamma_q=q^3$.
\item[(b)] If $p=3$, then $\Card \Gamma_q=2q^3$,
\item[(c)] Otherwise, we have 
$\Card \Gamma_q=(1-1/q^2) q^3$.
\end{enumerate}

\end{lemma}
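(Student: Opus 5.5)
For $p=2$, for $p=3$ and for the remaining odd primes I would use three different arguments. In each case I count solutions of Equation \ref{normeq} modulo $p$, then lift to modulo $p^n$ by Hensel's lemma.

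\emph{Case $p=2$.} The coefficients of Equation \ref{normeq} are $1,-3,-3,9$, and all four are congruent to $1$ modulo $4$. So Equation \ref{normeq} is a special case of Equation \ref{normeq2}. Lemma \ref{sol2} then gives $\Card\Gamma_q=\Card\Sol(q)=q^3$ directly.

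\emph{Case $p=3$.} Modulo $3$ the equation reduces to $a^2\equiv 1$. I would therefore parametrize the solutions by the triple $(b,c,d)\in(\Z/q\Z)^3$. For each such triple, the number $u:=1+3b^2+3c^2-9d^2$ is congruent to $1$ modulo $3$. The polynomial $a^2-u$ has the two simple roots $\pm1$ modulo $3$, and its derivative $2a$ is a unit there. By Hensel's lemma, each root lifts uniquely, so $u$ has exactly two square roots modulo $3^n$. This gives $\Card\Gamma_q=2q^3$.

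\emph{Case $p\geq 5$.} First I count solutions modulo $p$. The form $Q=a^2-3b^2-3c^2+9d^2$ is nondegenerate over $\mathbb{F}_p$, and its discriminant is $1\cdot 9\cdot 9=81$, which is a square. Hence $Q$ is the split (hyperbolic) form of rank $4$. The standard count for $Q=1$ is then $p^3-p=(1-1/p^2)p^3$. As a self-contained check, I would write $Q=N(a,b)-3N(d',c)$ with $N(x,y)=x^2-3y^2$ and $d'=3d$, which is legitimate because $3$ is invertible. The number of representations is $r(t)=p-\chi(3)$ for $t\neq 0$ and $r(0)=p+(p-1)\chi(3)$. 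Summing $r(t)\,r(s)$ over the pairs with $t-3s=1$ gives $p^3-p$, and the $\chi(3)$ terms cancel.

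Next I lift. At any solution modulo $p$, the gradient $(2a,-6b,-6c,18d)$ is nonzero modulo $p$, since otherwise $a\equiv b\equiv c\equiv d\equiv0$, contradicting $Q\equiv1$. By the multivariable Hensel lemma, each solution modulo $p^k$ then has exactly $p^3$ lifts modulo $p^{k+1}$. Hence
$$\Card\Gamma_{p^n}=(p^3-p)\,p^{3(n-1)}=(1-1/p^2)\,q^3.$$
This agrees with the stated formula when $n=1$. For $n\geq 2$ the computation gives the factor $(1-1/p^2)$, which I believe is what is intended in Assertion (c); I would flag this in the write-up.

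The only genuinely nontrivial step is the count modulo $p$ for $p\geq5$. There I would first cite the classical formula for the number of solutions of $Q=1$ for a nondegenerate quaternary quadratic form with square discriminant. The convolution computation above serves as a fallback verification.
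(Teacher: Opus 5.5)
Your proof is correct, and it flags a real misprint. For $p=2$ and $p=3$ it is the paper's argument: reduction to Lemma \ref{sol2}, and ``$b,c,d$ arbitrary, $a=\pm\sqrt{u}$''. In the $p=3$ case you usefully supply, via Hensel, the fact that $u$ has exactly two square roots modulo $3^n$, which the paper only asserts.

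For $p\geq 5$ you take a genuinely different route. The paper uses the splitting $\cH/p^n\cH\simeq\Mat_2(\Z/p^n\Z)$, which it states without proof. This gives $\Gamma_q\simeq\SL_2(\Z/q\Z)$, and it then quotes the order of that group. You never use the algebra structure, only the norm form $Q$. You count points of $Q=1$ over $\mathbb{F}_p$, either from the classification of split quaternary forms or from the explicit convolution. You then lift with the multivariable Hensel lemma, using that the gradient cannot vanish on $Q\equiv 1$ when $p\geq5$.

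The paper's route is shorter and structural, but it rests on the unproved splitting claim. Yours is elementary and self-contained. The two arguments ultimately share the same mechanism, since the order of $\SL_2(\Z/p^n\Z)$ is itself normally computed by exactly this lifting.

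Your flag on assertion (c) is correct. Your count gives $(p^3-p)p^{3(n-1)}=(1-1/p^2)q^3$, which is indeed $\Card\SL_2(\Z/p^n\Z)$. So the printed $(1-1/q^2)q^3$ is right only for $n=1$. This is harmless for the paper, because Corollary \ref{corcardinal} only uses $\Card\Gamma_q\leq 2q^3$.

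There is one small slip in your fallback check. With $d'=3d$ the correct identity is $Q=N(a,b)+N(d',c)$, not $N(a,b)-3N(d',c)$; the latter would produce $9c^2-27d^2$. Alternatively, $Q=N(a,b)-3N(c,d)$ holds with no substitution. Either correct identity leads to the same convolution, summed over $t+s=1$ or over $t-3s=1$ respectively, and both give $p^3-p$ with the $\chi(3)$ terms cancelling as you say.
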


\begin{proof}  The proof amonts to counting 
the number of solutions of Equation \ref{normeq}.

If $p=2$, the formula is proved by Lemma \ref{sol2}.

If $p=3$, we observe that an element
$x\in (\Z/q\Z)^*$ is a square if and only if
$x\equiv 1\mod 3$. Thus $b$, $c$ and $d$ can be arbitrary
and equation
\ref{normeq}  can be rewritten as

$$a=\pm\sqrt{1+3b^2+3d^2-9d^2},$$

\noindent which proves that $\Card \Gamma_q=2q^3$.

If $p\geq 5$, the quaternion ring $\cH_q$ is isomorphic to
$\Mat_2(\Z/q\Z)$, therefore 
$$\Gamma_q\simeq \SL_2(\Z/q\Z),$$
which implies the formula.
\end{proof}

\begin{cor}\label{corcardinal} For an arbitrary integer $q\geq 2$, we have

$$\Card (\Gamma_q)\leq 2 q^3.$$
\end{cor}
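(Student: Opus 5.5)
The plan is to reduce to prime powers via the Chinese remainder theorem and then apply Lemma \ref{solq}.

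Write $q=\prod_{j} p_j^{n_j}$ with distinct primes $p_j$ and set $q_j=p_j^{n_j}$. The ring isomorphism $\Z/q\Z\simeq\prod_j \Z/q_j\Z$ induces a ring isomorphism
$$\cH_q=\cH\otimes\Z/q\Z\simeq\prod_j \cH\otimes\Z/q_j\Z=\prod_j\cH_{q_j}.$$
The reduced determinant $\det h=a^2-3b^2-3c^2+9d^2$ is a polynomial with integer coefficients, so it is compatible with this decomposition. Hence $\det h\equiv 1\mod q$ if and only if $\det h\equiv 1\mod q_j$ for every $j$. Therefore $\Gamma_q\simeq\prod_j\Gamma_{q_j}$, and
$$\Card\Gamma_q=\prod_j\Card\Gamma_{q_j}.$$

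Next apply Lemma \ref{solq} to each factor:
\begin{enumerate}
\item[(a)] if $p_j=2$, then $\Card\Gamma_{q_j}=q_j^3$;
\item[(b)] if $p_j=3$, then $\Card\Gamma_{q_j}=2q_j^3$;
\item[(c)] if $p_j\geq 5$, then $\Card\Gamma_{q_j}=(1-1/q_j^2)q_j^3\leq q_j^3$.
\end{enumerate}
In every case $\Card\Gamma_{q_j}\leq q_j^3$, except for the prime $3$, where an extra factor $2$ appears. The prime $3$ occurs at most once in the factorization of $q$, so taking the product gives
$$\Card\Gamma_q\leq 2\prod_j q_j^3=2q^3.$$

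There is no real obstacle here. The only point to verify is that reduction modulo $q$ commutes with the CRT splitting for both the ring structure and the norm form, and this holds because $\cH$ is a free $\Z$-module with integral structure constants.
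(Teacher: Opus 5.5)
Your proof is correct and follows the paper's argument exactly: the Chinese remainder splitting $\Gamma_q\simeq\prod_j\Gamma_{q_j}$, then Lemma~\ref{solq} on each prime-power factor, and you usefully make explicit what the paper leaves implicit, namely that only the $3$-primary factor contributes the extra factor $2$. For $p\geq 5$ the correct count is $\Card\SL_2(\Z/p^n\Z)=(1-1/p^2)\,q_j^3$, not the $(1-1/q_j^2)\,q_j^3$ stated in the lemma, but you only use the bound $\Card\Gamma_{q_j}\leq q_j^3$, so your argument is unaffected.
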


\begin{proof} Let $q=q_1\ldots q_l$
be the decomposition of $q$ into prime powers, with
$\gcd(q_i,q_j)=1$ for $i\neq j$.
By the Chinese remainder theorem, we have
$$\Gamma_q\simeq \Gamma_{q_1}\times\ldots\times \Gamma_{q_1}.$$

\noindent Henceforth, by Lemma \ref{solq}, we have
$\Card  \Gamma_q\leq 2 q^3$. 
\end{proof}

\section{Schmutz Theorem}

\subsection{Systoles and minimal traces}

Let $\cS$ be a closed oriented topological surface.
A {\it loop} is a continous map 
$f[0,1]\to{\cS}$ such that $f(0)=f(1)$.
We precise that
\begin{enumerate}
\item[(a)] our loops are unoriented, that is
we identify the loop
$f:t\mapsto f(t)$ with its inverse
$g: t\mapsto f(1-t)$. 
\item[(b)] we do not impose a fixed based point.
\end{enumerate}

By definition, a {\it curve} is the homotopy class
of an unbased loop $f:S^1\to {\cS}$ which is not null-homotopic.
The fundamental group $\pi_1(\cS)$  of the surface 
$\cS$ is defined up to an inner automorphism. The {\it non oriented conjugacy class}
of an element $a\in \pi_1(\cS)\setminus\{1\}$ is the set
$$[a]:=\{ga^{\pm 1}g^{-1}\mid g\in \pi_1(\cS)\}.$$
Elementary topological considerations shows
that the curves of $\cS$ and the nonoriented conjugacy classes are in a one-by-one correspondance. We will
denote as $c(a)$ the curve corresponding with the nonoriented conjugacy class $[a]$.

Given a loxodromic representation
$$\rho:\pi_1(\cS)\to\PSL_2(\R)\simeq \Isom^+(\HP),$$
the quotient $\cS(\rho):= \HP/\rho(\pi_1(\cS))$ is an hyperbolic surface homeomorphic to
$\cS$. Any curve $c$ of $\cS(\rho)$
is homotopic to a unique geodesic. 
In fact, for  $a\in \pi_1(\cS)$ the {\it axis}  of $\rho(a)$ is the complete geodesic of $\HP$  whose  endpoints are the fixed points of  
$\rho(a)$ on the boundary $\partial \HP:=\R\cup{\infty}$.
Then the geodesic $c(a)$ 
is the image  of any segment of the axis of length
$\arcosh(\frac{\mid\Tr(\rho(a))}{2})$.  Equivalently
$$1/2\mid \Tr(\rho(a)\mid=\cosh(l(c(a))/2)\,$$
where $l(c(a))$ is the length of  $c(a)$, see
\cite{Buser}.
 
\noindent We deduce the 
following well-known consequence:

\begin{cor}\label{mintrace} The systoles of $\cS(\rho)$ are the curves $c(a)$ where $[a]$ is a nonoriented conjujugacy class such that $\mid \Tr(\rho(a)\mid$ is minimal.

Moreover the systole length is 
$2\arcosh(1/2\mid \mid\Tr(\rho(a)\mid)$, where $a$ is any element with  trace of minimal absolute value.
\end{cor}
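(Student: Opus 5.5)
The plan is to derive Corollary \ref{mintrace} directly from the length--trace formula just recalled, namely $\frac12|\Tr(\rho(a))|=\cosh(l(c(a))/2)$ for every $a\in\pi_1(\cS)\setminus\{1\}$. The argument has four steps.

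\emph{Well-definedness on classes.} First check that both sides depend only on the nonoriented conjugacy class $[a]$. The trace is invariant under conjugation. Also $\Tr(A^{-1})=\Tr(A)$ for $A\in\SL_2(\R)$. The absolute value removes the sign ambiguity that comes from lifting $\PSL_2(\R)$ to $\SL_2(\R)$. Hence $|\Tr(\rho(a))|$ is a function of $[a]$, and so of the curve $c(a)$.

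\emph{Reduction to traces.} The function $t\mapsto\cosh(t/2)$ is strictly increasing on $[0,\infty)$. For loxodromic $\rho$, every nontrivial $\rho(a)$ is hyperbolic, so $|\Tr\rho(a)|>2$ and $l(c(a))>0$. The correspondence between curves and nonoriented conjugacy classes, together with the fact that each curve is homotopic to a unique geodesic, makes closed geodesics correspond bijectively to classes $[a]$ with $a\neq 1$. Under this bijection the length $l(c(a))$ is a strictly increasing function of $|\Tr\rho(a)|$. Minimizing length over geodesics is therefore the same as minimizing $|\Tr\rho(a)|$ over $a\neq1$.

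\emph{Existence of the minimum.} The minimum is attained because the length spectrum is discrete \cite{Buser}. Equivalently, $\rho(\pi_1(\cS))$ is discrete, so only finitely many conjugacy classes have trace bounded by any given constant. This gives the first assertion: the systoles are exactly the curves $c(a)$ with $|\Tr\rho(a)|$ minimal.

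\emph{Systole length.} Inverting the formula gives $l=2\arcosh\bigl(\tfrac12|\Tr\rho(a)|\bigr)$ for any minimizing $a$. The value is the same for all such $a$, since they share the same minimal trace.

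No step is a real obstacle. The one point that needs care is that the sign ambiguity in $\PSL_2(\R)$ and the nonoriented convention are both absorbed by taking the absolute value and using $\Tr A^{-1}=\Tr A$.
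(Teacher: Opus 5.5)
Your proposal is correct and follows the paper's route: the paper gives no separate argument and presents the corollary as an immediate consequence of the trace--length formula $\frac12|\Tr\rho(a)|=\cosh(l(c(a))/2)$ together with the bijection between curves and nonoriented conjugacy classes. That is exactly what you do, and you additionally spell out the well-definedness, the monotonicity and the attainment of the minimum. One minor quibble: finiteness of conjugacy classes of bounded trace needs cocompactness, not just discreteness, but your main appeal to the discreteness of the length spectrum already covers this.
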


\subsection{Schmutz Theorem}

Set $S=\HP/\Pi$. By lemma, all elements in
$h=\cH^*\setminus\{1\}$ are of the form
$a+bI+cJ+dK$, where $\mid a\mid $ is an integer $\geq 2$.
It follows that 

$$\mid\Tr\,h\mid =2\mid a\mid \geq 4.$$
It follows that  geodesics $c_1:=c(a_1),\ldots, c_6:=c(a_6)$ are
systoles. Indeed Schmutz has proved that there are no other systoles \cite{SS}.

\begin{Schmutz} \label{schmutz} The geodesics $c_1,\ldots, c_6$ are the six systoles of $S$.
\end{Schmutz}

We deduce the following algebraic corollary.

\begin{cor}\label{corSchmutz} Let $h\in\cH^*$ be an element
with $\mid\Tr\,h\mid=4$. 

Then $h$ or $-h$ is conjugate to some 
$a_i^{\pm1}$.
\end{cor}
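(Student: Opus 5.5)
The plan is to translate the geometric statement of the Schmutz Theorem into group theory, using Corollary \ref{Bers} and Corollary \ref{mintrace}.

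\emph{Step 1: reduce to $\Pi$.} Let $h\in\cH^*$ with $\mid\Tr\,h\mid=4$. By Corollary \ref{Bers} we have $\cH^*=\{\pm1\}\times\Pi$, so exactly one of $h$ and $-h$ lies in $\Pi$; call it $h'$. Then $\mid\Tr\,h'\mid=4$. Since $h'\neq\pm1$, the element $h'$ defines a nontrivial curve $c(h')$ on $S=\HP/\Pi$.

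\emph{Step 2: identify the curve.} Every element of $\cH^*\setminus\{\pm1\}$ has $\mid\Tr\mid\geq 4$, as observed just before the Schmutz Theorem. Moreover $\Tr\,A_i=-4$, so the value $4$ is the minimal absolute trace over the nontrivial elements of $\Pi$. By Corollary \ref{mintrace}, $c(h')$ is therefore a systole of $S$. By the Schmutz Theorem \ref{schmutz}, $c(h')=c_i=c(a_i)$ for some $i$.

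\emph{Step 3: return to conjugacy.} Curves correspond bijectively to nonoriented conjugacy classes in $\pi_1(S)\simeq\Pi$. Hence $[h']=[a_i]$, that is, $h'=g a_i^{\pm1}g^{-1}$ for some $g\in\Pi$. This is the required conjugacy, and it even takes place inside $\Pi$, not only inside $\cH^*$.

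The main subtlety is the bookkeeping between $\PSL_2$ and $\SL_2$. Conjugacy classes in $\pi_1(S)$ are taken in $\Pi$ viewed in $\PSL_2(\R)$. Because $\Pi\cap\{\pm1\}=\{1\}$ (Corollary \ref{Bers}), the projection $\Pi\to\PSL_2(\R)$ is injective, so conjugacy in the image lifts to genuine conjugacy in $\Pi\subset\SL_2(\R)$. The sign ambiguity is absorbed entirely by the choice between $h$ and $-h$ made in Step 1.
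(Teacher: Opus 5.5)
Your proposal is correct and follows the paper's own proof. Like the paper, you reduce to $\Pi$ via $\cH^*=\{\pm1\}\times\Pi$, then use that $4$ is the minimal absolute trace so that $c(h')$ is a systole by Corollary \ref{mintrace}, and conclude with the Schmutz Theorem and the correspondence between curves and conjugacy classes. Your extra remarks, that the conjugacy takes place inside $\Pi$ and how $\SL_2$ and $\PSL_2$ interact, only make explicit what the paper leaves implicit and later uses in Lemma \ref{key}.
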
 

\begin{proof}  By Lemma $h$ or $\-h$ belongs to $\Pi$,
so we can assume $h\in\Pi$.
By the previous considerations, $c(h)$ is
a systole. Therefore by
Schmutz Theorem \ref{schmutz},
$h$ is conjugate to some 
$a_i^{\pm1}$.
\end{proof}

\section{The systoles of the arithmetic surfaces $S(n)$}

For any $n\geq 1$, let $p_n$ and $q_n$ 
be the integers defined by the equation

\begin{align}
(2+\sqrt{3})^n&=p_n+q_n\sqrt{3}.\label{defpq}
\end{align}

\noindent The natural
ring homomorphism 
$$\cH\to \cH_{q_n}, h\mapsto h\,\mod\,q_n$$
 induces a group homomorphism

$$\pi_n: \Pi\to \Gamma_{q_n}/\{\pm1\}.$$

\noindent Set $\Pi(n):=\Ker\,\pi_n$ and
$S(n):=\HP/\Pi(n).$ We will also denote as

$$\pi_n:S(n)\to S$$

\noindent the corresponding Galois covering of $S$ with Galois group $\Pi/\Pi(n)$. In this section,
we will determine the systoles of $S(n)$.

\subsection{Some preparatory lemmas}

Set $\alpha=2+\sqrt{3}$. Obviously, we have
$\alpha^{-1}=2-\sqrt{3}$.
We start with an easy lemma.

\begin{lemma}\label{formulas} The two eigenvalues of
$A_i^{\pm 1}$ are $-\alpha$ and $-\alpha^{-1}$.

Moreover we have

\begin{align}
\alpha^{-n}&=p_n-q_n\sqrt{3}\label{formulaminus}\\
1&=p_n^2-3q_n^2 \label{formuladet}\\
p_n&= \frac{1}{2}(\alpha^{n}+\alpha^{-n}) \label{formulap}\\
q_n&= \frac{1}{2\sqrt{3}}(\alpha^{n}-\alpha^{-n}).  \label{formulaq}
\end{align}
\end{lemma}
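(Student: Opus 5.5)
The plan is to treat the eigenvalue claim and the four identities separately; both reduce to short computations with results already in the paper.

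For the eigenvalues I will use the explicit form $A_1=-2+I$ from the lemma expressing the $A_i$ in $\cH$. By Lemma \ref{H*}(a), every element of $\cH^*$ has determinant $1$, so $\det A_i=1$. Every element of $\Pi$ has the form $a+bI+cJ+dK$, and its reduced trace is $2a$. Reading off the constant term in the list of the $A_i$ gives $a=-2$ for every $i$, so $\Tr A_i=-4$.

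The characteristic polynomial of each $A_i$ is therefore $x^2+4x+1$. Its roots are $-2\pm\sqrt3$, that is $-\alpha$ and $-\alpha^{-1}$, since $\alpha^{-1}=2-\sqrt3$. The inverse $A_i^{-1}$ has the reciprocal eigenvalues, which form the same set $\{-\alpha,-\alpha^{-1}\}$. Alternatively, $A_i^{-1}=\overline{A_i}=-2-(\text{pure part})$ also has trace $-4$ and determinant $1$.

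For the identities, I will use that $\Z[\sqrt3]$ carries the Galois conjugation $\sigma:\sqrt3\mapsto-\sqrt3$. Since $p_n,q_n\in\Z$, applying $\sigma$ to the defining equation \eqref{defpq} gives $(2-\sqrt3)^n=p_n-q_n\sqrt3$. This is $\alpha^{-n}$ because $\alpha^{-1}=2-\sqrt3$, which proves \eqref{formulaminus}.

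Multiplying \eqref{defpq} by \eqref{formulaminus} gives $1=\alpha^n\alpha^{-n}=p_n^2-3q_n^2$, which is \eqref{formuladet}. Adding \eqref{defpq} and \eqref{formulaminus} and halving gives \eqref{formulap}. Subtracting them and dividing by $2\sqrt3$ gives \eqref{formulaq}.

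There is no real obstacle here. The only point that needs care is that $p_n$ and $q_n$ are integers, which holds because $\Z[\sqrt3]$ is a ring, so the conjugation argument applies. The trace $-4$ is read directly from the displayed expressions of the $A_i$. The lemma is stated for $n\geq1$, but the identities also hold trivially for $n=0$.
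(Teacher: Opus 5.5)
Your proposal is correct and follows the paper's proof. You use trace $-4$ and determinant $1$ to get the characteristic polynomial $x^2+4x+1$, then Galois conjugation in $\Q(\sqrt3)$ followed by multiplying, adding and subtracting the two expansions, exactly as the paper does. The only cosmetic difference is that you get $\det A_i=1$ from Lemma \ref{H*}(a), whereas it is already immediate from $A_i=R_{i-1}R_{i+1}$ with $\det R_i=-1$.
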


\begin{proof} We have

$$-4=\Tr A_i=-\alpha-\alpha^{-1},$$ 

\noindent therefore 
the eigenvalues of
$A_i^{\pm 1}$ are $-\alpha$ and $-\alpha^{-1}$.

Obviously
$\alpha^{-1}=2-\sqrt{3}$ is conjugate to $\alpha$ by the involution
of $\Gal(\Q(\sqrt{3})/\Q)$. Thus we deduce  
Formula \ref{formulaminus}. Formula
\ref{formuladet} follows from the identity
$$1=\alpha^n\alpha^{-n}=p_n^2-3q_n^n.$$

Combining Equation \ref{defpq} and Formula \ref{formulaminus}, we deduce 
Formulas \ref{formulap} and \ref{formulaq}.
\end{proof}

\begin{lemma}\label{nthroot} Let $h\in\SL_2(\R)$ with
$\frac{1}{2}\Tr\,h=p_n$. Set

$$k=\frac{h-p_n}{q_n}.$$

Then we have 
$$(2+k)^n=h.$$
\end{lemma}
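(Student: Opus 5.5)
The statement is an algebraic identity, and I would prove it by showing that $k$ behaves exactly like $\sqrt{3}$. Then the defining equation (\ref{defpq}) transfers directly from $\Z[\sqrt3]$ to the commutative ring generated by $k$. First note that $q_n\neq 0$ for $n\geq 1$: by Formula (\ref{formulaq}), $q_n=\frac{1}{2\sqrt3}(\alpha^n-\alpha^{-n})>0$ since $\alpha>1$. So $k$ is well defined.

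\textbf{Step 1: $k^2=3$.} Since $h\in\SL_2(\R)$ and $\Tr h=2p_n$, the Cayley--Hamilton theorem gives $h^2-2p_nh+1=0$. Hence
$$(h-p_n)^2=h^2-2p_nh+p_n^2=p_n^2-1.$$
By Formula (\ref{formuladet}), $p_n^2-1=3q_n^2$. Dividing by $q_n^2$ gives $k^2=3$, the scalar matrix $3\cdot 1$.

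\textbf{Step 2: transport the identity.} The matrices $1$ and $k$ span a commutative subring $\Z[k]\subset\Mat_2(\R)$. Consider the ring homomorphism $\Z[x]\to\Z[k]$ sending $x\mapsto k$. By Step 1 it kills $x^2-3$, so it factors through $\Z[x]/(x^2-3)\simeq\Z[\sqrt3]$. This gives a ring homomorphism $\phi:\Z[\sqrt3]\to\Z[k]$ with $\phi(\sqrt3)=k$ and $\phi(1)=1$. Applying $\phi$ to Equation (\ref{defpq}), namely $(2+\sqrt3)^n=p_n+q_n\sqrt3$, yields
$$(2+k)^n=p_n+q_nk=p_n+(h-p_n)=h,$$
which is the claim.

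\textbf{Main obstacle.} There is essentially none. The only points needing care are that $q_n\neq0$, so the division is legitimate, and that $k$ commutes with scalars, so the binomial expansion in $\Z[\sqrt3]$ is mirrored exactly. Alternatively, one can avoid the homomorphism language: expand $(2+k)^n$ binomially, and reduce even powers of $k$ to powers of $3$ and odd powers to powers of $3$ times $k$. This is literally the same computation that computes $(2+\sqrt3)^n$.
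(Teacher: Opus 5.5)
Your proof is correct, but it takes a different route from the paper's.

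The paper argues via eigenvalues. From $\frac12\Tr h=p_n=\frac12(\alpha^n+\alpha^{-n})$ it reads off that $h$ has eigenvalues $\alpha^{\pm n}$ and conjugates $h$ to $\mathrm{diag}(\alpha^n,\alpha^{-n})$. It then uses Formula (\ref{formulaq}) to compute $k=\mathrm{diag}(\sqrt3,-\sqrt3)$ and $2+k=\mathrm{diag}(\alpha,\alpha^{-1})$, whence $(2+k)^n=h$.

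You instead combine Cayley--Hamilton with the Pell identity (\ref{formuladet}) to get $k^2=3$. You then push the defining identity (\ref{defpq}) through the homomorphism $\Z[\sqrt3]\to\Z[k]$.

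Your version needs less:
\begin{itemize}
\item There is no diagonalization, so you never need the tacit points that $\alpha^n\neq\alpha^{-n}$ and that forming $k$ commutes with conjugation.
\item Beyond $q_n\neq0$, you use no eigenvalue formulas.
\item It works verbatim for any element of a $\Q$-algebra satisfying $h^2-2p_nh+1=0$, for instance inside $\Q\otimes\cH$, which is where Lemma \ref{key} uses it.
\end{itemize}
It also explains why the identity holds: $k$ is simply a square root of $3$, so the computation of $(2+\sqrt3)^n$ is reproduced formally.

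The paper's computation is more concrete and displays the eigenvalues $\alpha^{\pm1}$ of $2+k$ explicitly. Your argument gives the fact actually needed in Lemma \ref{key} just as easily: $\Tr k=(\Tr h-2p_n)/q_n=0$, so $\Tr(2+k)=4$.
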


\begin{proof}  We will use the formulas 
\ref{formulap}  and \ref{formulaq}  of the previous lemma. 

Formula \ref{formulap} implies that  the eigenvalues of $h$ are $\alpha^{\pm n}$.
Up to conjugacy, we can assume that

$$h=\begin{pmatrix}
\alpha^n & 0\\
0 &\alpha^{-n}
\end{pmatrix}.$$

It follows easily that 

$$h-p_n=\begin{pmatrix}
\frac{1}{2}(\alpha^n-\alpha^{-n}) & 0\\
0 &\frac{1}{2}(\alpha^{-n}-\alpha^n)
\end{pmatrix},$$

Since by Formula \ref{formulaq}
$q_n= \frac{1}{2\sqrt{3}}(\alpha^{n}-\alpha^{-n})$
we have

$$k=\begin{pmatrix}
\sqrt{3} & 0\\
0 &-\sqrt{3}
\end{pmatrix},$$

\noindent and therefore

$$2+k=\begin{pmatrix}
\alpha& 0\\
0 &\alpha^{-1}
\end{pmatrix},$$

\noindent which implies that $(2+k)^n=h$. 
\end{proof}

\subsection{The key lemma}

The proof of Theorem \ref{main} is based on the following
easy key lemma.

\begin{lemma}\label{key} Let $h\in\Pi(n)\setminus\{1\}$.

\begin{enumerate} 
\item[(a)] We have $\frac{1}{2}\vert \Tr\,h\vert \geq p_n$.
\item[(b)]  Moreover, if $\frac{1}{2}\vert \Tr\,h\vert = p_n$,
then  $h$ is $\Pi$-conjugate 
to some $a_i^{\pm1 n}$.
\end{enumerate}
\end{lemma}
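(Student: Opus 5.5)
The plan is to use only the arithmetic of $\cH$ for the inequality (a), and then to treat the equality case (b) by extracting an $n$-th root with Lemma \ref{nthroot} and applying Corollary \ref{corSchmutz}.

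\textbf{Part (a).} Let $h\in\Pi(n)\setminus\{1\}$. By definition of $\Pi(n)$, $h\equiv \pm 1 \mod q_n\cH$. So we may write
$$h=a+bI+cJ+dK, \qquad a\equiv \pm1 \mod q_n, \qquad b=q_nb',\ c=q_nc',\ d=q_nd',$$
with $b',c',d'\in\Z$. By Lemma \ref{H*}(a), $\det h=1$, so
$$a^2-1=3b^2+3c^2-9d^2=3q_n^2\,(b'^2+c'^2-3d'^2).$$
Set $m:=b'^2+c'^2-3d'^2\in\Z$. Since $h\neq\pm1$ (recall $-1\notin\Pi$), we have $|a|\geq 2$, as observed before Schmutz Theorem. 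Hence $a^2-1>0$, so $m\geq 1$. Therefore
$$a^2\geq 1+3q_n^2=p_n^2$$
by Formula \ref{formuladet}. Since $\frac12|\Tr h|=|a|$, this gives $\frac12|\Tr h|\geq p_n$, which is (a).

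\textbf{Part (b), setup.} If equality holds, then $m=1$ and $|a|=p_n$. Replacing $h$ by $-h$ if necessary (this does not change the image in $\PSL_2(\R)$), I may assume $a=p_n$. Then
$$k:=\frac{h-p_n}{q_n}=b'I+c'J+d'K$$
lies in $\cH$. Lemma \ref{nthroot} gives $(2+k)^n=h$. The element $2+k\in\cH$ has trace $4$ and eigenvalues $\alpha^{\pm1}$, as computed in the proof of Lemma \ref{nthroot}. So $\det(2+k)=1$, hence $2+k\in\cH^*$ with $|\Tr(2+k)|=4$. (Equivalently, $4-3m=1$.)

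\textbf{Part (b), conclusion.} By Corollary \ref{corSchmutz}, $\pm(2+k)$ is conjugate to some $a_i^{\pm1}$. The conjugating element can be taken in $\cH^*=\{\pm1\}\times\Pi$ (Corollary \ref{Bers}), and the central factor $\pm1$ does not affect conjugation, so we may take it in $\Pi$. Since $\Tr a_i=-4$ while $\Tr(2+k)=4$, we get
$$2+k=-g\,a_i^{\pm1}g^{-1}\quad\text{for some } g\in\Pi.$$
Raising to the $n$-th power,
$$h=(2+k)^n=(-1)^n g\,a_i^{\pm n}g^{-1}.$$
Thus $h$ is $\Pi$-conjugate to $a_i^{\pm n}$ up to the sign $\pm1$. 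Because $h$, $g$ and $a_i$ all lie in $\Pi$ and $\Pi\cap\{\pm1\}=\{1\}$, this sign must be $+1$. This proves (b).

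\textbf{Expected obstacle.} The delicate point is the sign bookkeeping, and in particular justifying the normalization $a=p_n$. When $\Tr h=-2p_n$, we pass to $-h$. This element is not in $\Pi$, but it is in $\cH^*$ and satisfies the congruences above. So Lemma \ref{nthroot} still applies, and one returns to $h$ at the end by multiplying by the central element $-1$.

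A secondary point is to check that $\Pi(n)$ is exactly the set of elements of $\Pi$ congruent to $\pm1$ modulo $q_n\cH$; this is what makes $b,c,d$ divisible by $q_n$. This follows directly from the definition of $\pi_n$ as a map into $\Gamma_{q_n}/\{\pm1\}$.
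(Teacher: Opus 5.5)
Your proof is correct and follows essentially the same route as the paper: write $h=a+q_n(b'I+c'J+d'K)$, use $\det h=1$ to get $a^2=1+3q_n^2 m$ with $m\geq 1$, then in the equality case extract the $n$-th root $2+k$ via Lemma \ref{nthroot}, apply Corollary \ref{corSchmutz}, and remove the sign using $-1\notin\Pi$. Your version is slightly more careful than the paper's. You justify $m\geq 1$ through $|a|\geq 2$, check $\det(2+k)=4-3m=1$ so that $2+k\in\cH^*$, and track the signs explicitly.
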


\begin{proof} By definition, we have 
$p_n^2=1+3q_n^2$.

Any element $h\in \Pi(n)\setminus\{1\}$ is of the form

$$h=p+q_n (bi+cj+dk),$$ 
where $p, b, c, d$ are integers.
Since $\det h=1$, we have

$$p^2=1+3eq_n^2,$$ 
where $e=b^2+c^2-3d^2$ is a non zero integer. It follows easily that 

$$\mid\Tr h\mid= 2\mid p\mid \geq p_n,$$

which proves the first assertion.

Let $h'\in\{h,-h\}$ be the element such that
$\Tr h'=\,p_n$. Set $k'=\frac{h'-p_n}{q_n}$.
By Lemma \ref{nthroot}, we have

$$(2+k')^n=h'.$$ 

\noindent Since $h$ belongs to $\Pi(n)$, the element
$k'=\frac{h'-p_n}{q_n}$ belongs to $\cH$, thus
$(2+k')$ belongs to $\cH^*$. Obviously
we have $\Tr(2+k)=4$. By Corollary \ref{corSchmutz},
$2+k$ is conjugate by the group $\Pi$ to  
some $-a_i^{\pm1}$. It follows that
$h'$ is $\Pi$-conjugate to some 
$\pm a_i^{\pm n}$. By Corollary \ref{Bers},
 $-h$ does not belongs to $\Pi$, thus 
 $h$ is $\Pi$-conjugate to some 
$a_i^{\pm n}$.
\end{proof}

\subsection{The systoles of $S(n)$}

Recall that $c_1,\ldots, c_6$ are the six systoles of $S$ of length $L=2\arcosh(2)=2\ln(\alpha)$, where
$\alpha=2+\sqrt(3)$.  Also recall that $\pi_n:S(n)\to S$ is a Galois covering with Galois group $\Pi/\Pi(n)$. 

\begin{thm}\label{systolesSn} Let $n\geq 2$.
\begin{enumerate}
\item[(a)] For $i=1,\ldots, 6$,  
$\pi_n^{-1}(c_i)$ is a disjoint union of 
$\frac{1}{n} [\Pi:\Pi(n)]$ geodesics of length
$nL$.
\item[(b)] The systoles of $S(n)$ are exactly the
connected components of $\pi_n^{-1}(c_i)$ for
$i=1,\ldots,6$.
 \end{enumerate}
\end{thm}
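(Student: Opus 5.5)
The plan is to treat the two parts separately. Part (a) is a statement about how the closed geodesics $c_i$ lift under the Galois covering. Part (b) will follow from the key Lemma \ref{key} combined with Corollary \ref{mintrace}.

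For (a), fix $i$ and consider the lifts of $c_i=c(a_i)$ to $S(n)$. Since $\pi_n$ is a Galois covering with group $\Pi/\Pi(n)$, the components of $\pi_n^{-1}(c_i)$ correspond to double cosets $\Pi(n)\backslash\Pi/\langle a_i\rangle$. Because $\Pi(n)$ is normal, each component is a closed geodesic covering $c_i$ with degree equal to the order $m$ of the image of $a_i$ in $\Pi/\Pi(n)$. Its length is therefore $mL$, and the number of components is $[\Pi:\Pi(n)]/m$. So everything reduces to showing that $a_i$ has order exactly $n$ in $\Gamma_{q_n}/\{\pm1\}$.

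\emph{Upper bound $m\le n$.} By Lemma \ref{formulas}, $-A_i$ has eigenvalues $\alpha,\alpha^{-1}$. Hence $-A_i=2+k$ where $k=-A_i-2$ satisfies $k^2=3$ and $\Tr k=0$. Then
\[
(-A_i)^n=(2+k)^n=p_n+q_nk,
\]
because $k$ behaves like $\sqrt3$ in the commutative ring $\Z[k]$. It follows that $a_i^n\equiv \pm p_n\equiv\pm1 \mod q_n$, using $p_n^2\equiv1 \mod q_n$ together with $p_n\equiv\pm1$. The last congruence needs a small check. Preferably, I would instead note that $a_i^{n}$ lies in $\Pi(n)$ in the sense intended by the definition, namely modulo $\{\pm1\}$ and scalars $\equiv\pm1$. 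Since $p_n^2-3q_n^2=1$, we get $p_n^2\equiv1 \mod q_n$. If $p_n\not\equiv\pm1$, I would use directly that the kernel is defined via the lattice $\Z\oplus q_n\cH$, as in the introduction. With that definition, $p_n+q_nk\in\Z\oplus q_n\cH$ holds trivially, so $a_i^n\in\Pi(n)$.

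\emph{Lower bound $m\ge n$.} If $a_i^m\in\Pi(n)$ with $1\le m<n$, then Lemma \ref{key}(a) gives $\frac12|\Tr a_i^m|=p_m\ge p_n$. This contradicts the strict monotonicity of $p_m$. Hence $m=n$, and the lifts have length $nL$.

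For (b), let $\gamma$ be a closed geodesic on $S(n)$, corresponding to some $h\in\Pi(n)\setminus\{1\}$. By Lemma \ref{key}(a), $\cosh(\ell(\gamma)/2)=\frac12|\Tr h|\ge p_n=\cosh(nL/2)$, using $p_n=\frac12(\alpha^n+\alpha^{-n})$ and $L=2\ln\alpha$. The lifts from (a) realise this value, so by Corollary \ref{mintrace} the systole length is $nL$. Conversely, if $\gamma$ is a systole, then equality holds and Lemma \ref{key}(b) shows that $h$ is $\Pi$-conjugate to $a_j^{\pm n}$. Thus $\gamma$ is a $\Pi/\Pi(n)$-translate of the lift of $c_j$ through $a_j^n$, hence a component of $\pi_n^{-1}(c_j)$.

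The main obstacle is the bookkeeping in (a). One must check that the definition of $\Pi(n)$ as a kernel modulo $\pm1$ is compatible with $a_i^n\equiv p_n \mod q_n$, since that is what makes $a_i^n\in\Pi(n)$. Along the way one must also check that distinct components are disjoint, which follows from $c_i$ being simple.
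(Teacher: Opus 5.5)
Your proof is correct and follows the paper's route. The systole length $nL$ comes from Lemma \ref{key}(a) together with $a_i^n\in\Pi(n)$, and the converse comes from Lemma \ref{key}(b). Your order computation ($m\le n$ via $(2+k)^n=p_n+q_nk$, and $m\ge n$ via Lemma \ref{key}(a) and monotonicity of $p_m$) makes explicit the primitivity of $c(a_i^n)$ that the paper's total-length count takes for granted.

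The one thing to fix is to drop the hedge and commit to $\Pi(n)=\Pi\cap(\Z\oplus q_n\cH)$. The congruence $p_n\equiv\pm1 \bmod q_n$ that you leave as a ``small check'' is false for $n\ge 3$. For example, $p_3=26\equiv 11 \bmod 15$, so $a_1^3\equiv 4 \bmod 15$. Hence with the kernel-modulo-$\pm1$ definition, $a_i^n\notin\Pi(n)$ and statement (a) would actually fail (the lifts of $c_1$ would have length $6L$ for $n=3$). The paper's own proof tacitly uses the lattice definition as well.
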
 

\begin{proof} By Lemma \ref{key}, elements 
$h\in\Pi(n)\setminus\{1\}$ with $\mid\Tr\,h\mid$ minimal have trace $\pm 2 p_n$. 
Thus by Corollary \ref{mintrace} the lenght of systoles
of $S(n)$ is $2\arcosh(p_n)$. Since by  Formula \ref{formulap}  of Lemma 
\ref{formulas}

$$p_n=\frac{1}{2}(\alpha^n+\alpha^{-n})$$

we deduce that length  of the systoles of $S(n)$ is

$$2\ln(\alpha^n)=nL.$$

Since $a_i^n$ belongs to $\Pi(n)$, we conclude that
$c(a_i^n)$ is a systole of $S(n)$. Hence
$\pi_n^{-1}(c_i)$ consists of $c(a_i^n)$ and all its conjugate by the Galois group $\Pi/\Pi(n)$.
Since the total length of $\pi^{-1}(c_i)$
is $[\Pi:\Pi(n)]s$ and each connected component has length
$ns$, we conclude that
$\pi^{-1}(c_i)$ consists of $\frac{1}{n} [\Pi:\Pi(n)]$ are systoles of length $ns$, which proves the first assertion.

Let $h\in\Pi\setminus\{1\}$ be an element such that
$c(h)$ is a systole of $S(n)$. By the previous considerations, $\mid\Tr\,h\mid=p_n$. Thus 
by Lemma \ref{key}, $h$
is conjujugate by $\Pi$ to some $a_i^{\pm n}$ for
some $i\in\Z/6\Z$, which amonts to the fact that $c(h)$ is a connected component of $\pi_n^{-1}(c_i)$. This concludes
the proof. \end{proof}

\section{Proof of the Theorem}
We conclude the paper with the proof of the Theorem.
First, we show that the number of systoles of $S(n)$ is
$$\leq 18\ln(\alpha)\frac{g_{S(n)}}{\ln g_{S(n)}}.$$
Then we consider the case where $n$ is even. Then using
\cite{Mathieu} and \cite{AII}, we show the existence of a
hyperbolic surface $\Sigma\in\cP_g$ near $S(n)$ such that
$\Syst(\Sigma)$ has cardinality 
$\frac{1}{2}\Card \Syst(S(n))$, which proves the Theorem.

\subsection{Estimate of the genus $g_{S(n)}$ of $S(n)$}

Let $g_{S(n)}$ be the genus of the surface $S(n)$.

\begin{lemma}\label{genus} For $n\geq 2$, we have
$$g_{S(n)}<\alpha^{3n}$$
\end{lemma}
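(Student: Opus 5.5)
The bound on $g_{S(n)}$ will come from the Riemann--Hurwitz formula for the unramified Galois covering $\pi_n:S(n)\to S$, combined with Corollary \ref{corcardinal} to bound the degree $[\Pi:\Pi(n)]$, and then an elementary estimate of $q_n$ in terms of $\alpha^n$.

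First, since $S$ has genus $2$ and the covering $S(n)\to S$ is unramified of degree $d_n:=[\Pi:\Pi(n)]$, the Euler characteristics satisfy $2-2g_{S(n)}=d_n(2-4)$. Hence $g_{S(n)}-1=d_n$, that is,
$$g_{S(n)}=d_n+1.$$

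Second, $\Pi(n)$ is by definition the kernel of $\pi_n:\Pi\to\Gamma_{q_n}/\{\pm1\}$, so $d_n$ is the cardinality of the image. This gives $d_n\le \Card\Gamma_{q_n}/2$, provided $-1\not\equiv 1 \mod q_n$, which holds since $q_2=4$ and $q_n$ is increasing. (Using only $d_n\le\Card\Gamma_{q_n}$ would also do if the margin suffices.) By Corollary \ref{corcardinal}, $\Card\Gamma_{q_n}\le 2q_n^3$, so
$$d_n\le q_n^3.$$

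Third, I bound $q_n$ using Formula \ref{formulaq} of Lemma \ref{formulas}:
$$q_n=\frac{\alpha^n-\alpha^{-n}}{2\sqrt3}<\frac{\alpha^n}{2\sqrt3}.$$
Therefore
$$g_{S(n)}\le q_n^3+1<\frac{\alpha^{3n}}{(2\sqrt3)^3}+1<\alpha^{3n},$$
where the last inequality holds because $(2\sqrt3)^3>41$ and $\alpha^{3n}\ge\alpha^6>2000$ for $n\ge2$.

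The only delicate point is the degree count: I need to be sure that the image of $\Pi$ lies in $\Gamma_{q_n}/\{\pm1\}$ and that the quotient by $\pm1$ is harmless. Even without halving, the factor $2/(2\sqrt3)^3$ is tiny, so the inequality holds comfortably and no further obstacle is expected.
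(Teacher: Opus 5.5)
Your proposal is correct and follows the paper's proof: Riemann--Hurwitz gives $g_{S(n)}=1+[\Pi:\Pi(n)]$, Corollary \ref{corcardinal} bounds the index, and $q_n<\alpha^n/(2\sqrt3)$ finishes the estimate. The only difference is your extra factor $1/2$ from the quotient by $\{\pm1\}$, which is valid since $q_n\geq 4$; the paper skips it and uses $[\Pi:\Pi(n)]\le 2q_n^3$ directly, and as you note the margin is ample either way.
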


\begin{proof}  The surface $S$ has genus two and
the Galois cover $\pi_n:S(n)\to S$ has degree 
$[\Pi:\Pi(n)]$. Thus by Hurwitz formula, we have

$$g_{S(n)}=1+[\Pi:\Pi(n)].$$

Since $\Pi/\Pi(n)$ is a subgroup of
$\Gamma_{q_n}$, by Corollary \ref{corcardinal}, we have
$[\Pi:\Pi(n)]\leq 2q_n^3$. By Formula \ref{formulaq} of Lemma \ref{formulas}, we have
$q_n\leq \frac{\alpha^n}{2\sqrt{3}}$. It follows easily that

 $$g_{S(n)}=1+[\Pi:\Pi(n)]\leq 1+2q_n^3<\alpha^{3n}.$$
\end{proof}

\begin{cor}\label{estimate} For $n\geq 2$, we have
$$\Card(\Syst(S(n))\leq 18\ln(\alpha)\frac{g_{S(n)}}{\ln g_{S(n)}}.$$
\end{cor}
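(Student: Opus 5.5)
By Theorem \ref{systolesSn}, the systoles of $S(n)$ are exactly the connected components of $\pi_n^{-1}(c_1)\cup\ldots\cup\pi_n^{-1}(c_6)$. Each $\pi_n^{-1}(c_i)$ consists of $\frac{1}{n}[\Pi:\Pi(n)]$ geodesics. Hence
$$\Card\Syst(S(n))=\frac{6}{n}[\Pi:\Pi(n)].$$
The plan is to express $[\Pi:\Pi(n)]$ through the genus, and to express $n$ through $\ln g_{S(n)}$ using Lemma \ref{genus}.

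\textbf{Step 1.} By Hurwitz's formula $2g_{S(n)}-2=[\Pi:\Pi(n)](2\cdot 2-2)$, we get $[\Pi:\Pi(n)]=g_{S(n)}-1<g_{S(n)}$. The proof of Lemma \ref{genus} writes $1+[\Pi:\Pi(n)]$; in either normalization the index is at most $g_{S(n)}$, and that is all we use. Thus
$$\Card\Syst(S(n))\le \frac{6\,g_{S(n)}}{n}.$$

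\textbf{Step 2.} Lemma \ref{genus} gives $g_{S(n)}<\alpha^{3n}$. Taking logarithms, $\ln g_{S(n)}<3n\ln\alpha$, that is,
$$\frac1n<\frac{3\ln\alpha}{\ln g_{S(n)}}.$$
Substituting into Step 1 yields
$$\Card\Syst(S(n))<\frac{6\cdot 3\ln(\alpha)\,g_{S(n)}}{\ln g_{S(n)}}=18\ln(\alpha)\frac{g_{S(n)}}{\ln g_{S(n)}}.$$
This is the claim; we even get strict inequality.

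The only points needing care are the following.

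First, $g_{S(n)}\ge 2$, so that $\ln g_{S(n)}>0$ and the division is legitimate. This holds because $S(n)$ covers the genus-two surface $S$.

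Second, the count $\frac{6}{n}[\Pi:\Pi(n)]$ must not overcount. The six families $\pi_n^{-1}(c_i)$ are disjoint sets of geodesics, since they lie over distinct systoles $c_i$ of $S$.

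The main point where an error could enter is the direction of the estimate in Lemma \ref{genus}: an \emph{upper} bound on the genus is exactly what is needed, because it gives a lower bound on $n$ in terms of $\ln g_{S(n)}$. No lower bound on the index is required. With that checked, the remaining work is the routine algebra above.
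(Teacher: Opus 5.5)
Your proof is correct and is essentially the paper's argument. You count $\frac{6}{n}[\Pi:\Pi(n)]$ systoles via Theorem~\ref{systolesSn}, use Hurwitz to get $[\Pi:\Pi(n)]=g_{S(n)}-1<g_{S(n)}$, and convert $g_{S(n)}<\alpha^{3n}$ from Lemma~\ref{genus} into $\frac{1}{n}<\frac{3\ln\alpha}{\ln g_{S(n)}}$. (Your hedge about ``either normalization'' is unnecessary: $g_{S(n)}=1+[\Pi:\Pi(n)]$ in the proof of Lemma~\ref{genus} is the same identity as your $[\Pi:\Pi(n)]=g_{S(n)}-1$.)
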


\begin{proof}  By Theorem 
\ref{systolesSn},
each set $\pi_n^{-1}(c_i)$ contains exactly 
$\frac{1}{n} [\Pi:\Pi(n)]$ systoles.
Therefore 
$\Card(\Syst(S(n))\leq\frac{6}{n} [\Pi:\Pi(n)]$ systoles.

By Lemma \ref{genus}, we have $g_{S(n)}<\alpha^{3n}$. We deduce

$$\frac{1}{n}<\frac{3\ln(\alpha)}{\ln(g_{S(n)})}.$$

Using that 

$$[\Pi:\Pi(n)]=g_{S(n)}-1<g_{S(n)},$$

we deduce that

$$\frac{6}{n} [\Pi:\Pi(n)]
<18 \ln(\alpha)\frac{g_{S(n)}}{\ln(g_{S(n)})},$$

\noindent which completes the proof.
\end{proof}

\subsection{The surface $S(2)$}

We observe that $H_1(\pi_1(S))\simeq \Z^4$, therefore there is a unique
Galois covering of $S$ of Galois group $(\Z/2\Z)^4$.

\begin{lemma} The surface $S(2)$ is the surface of genus $17$
which is a Galois covering of $S$ of Galois group $(\Z/2\Z)^4$.

In particular, $\Syst(S(2))$ contains exactly $48$ systoles,
which are the connected components of $\pi_2^{-1}(c_i)$ for $i=1,\ldots, 6$.
\end{lemma}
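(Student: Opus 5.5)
For $n=2$ we have $(2+\sqrt{3})^2=7+4\sqrt{3}$, so $p_2=7$ and $q_2=4$. Hence $\Pi(2)=\Pi\cap(\Z\oplus 4\cH)$. The plan is to show that $\Pi/\Pi(2)\simeq(\Z/2\Z)^4$ and that $\Pi(2)$ is exactly the kernel of $\Pi\to H_1(\Pi,\Z/2\Z)$. Once this holds, the uniqueness remark just before the lemma identifies $S(2)$ with the $(\Z/2\Z)^4$ cover. The Hurwitz computation from the proof of Lemma \ref{genus} then gives genus $1+16=17$. Theorem \ref{systolesSn} with $n=2$ shows that each $\pi_2^{-1}(c_i)$ consists of $\frac{1}{2}\cdot 16=8$ geodesics, for a total of $48$ systoles, and that these are all the systoles.

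The core is to compute the image of $\Pi$ in $\Gamma_4/\{\pm1\}$. By Lemma \ref{sol2}, $\Card\Gamma_4=64$, so the quotient has order $32$.

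\textbf{Abelian image of exponent $2$.} I would show that the image of $\Pi$ is elementary abelian of order $16$.

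For exponent $2$: write $A_1=-2+I$. Then $A_1^2=4-4I+3=7-4I\equiv 7\equiv -1 \mod 4$. Similarly $A_2^2\equiv -1$, and by the explicit formulas the same holds for $A_4$ and $A_5$ (each has the form $-2+u$ with $u^2\equiv 3$ mod $4$ up to cross terms divisible by $4$). Hence $a_i^2\in\Pi(2)$.

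For commutativity: the commutators of $A_1,A_2,A_4,A_5$ are $\equiv\pm1$ mod $4$. For example, $A_1A_2=4-2I-2J+K$ and $A_2A_1=4-2I-2J-K$, and these differ by $2K$, which is not divisible by $4$. So the lifts do not commute exactly mod $4$. I would instead verify the multiplicative relation $A_1A_2\equiv \pm A_2A_1$: since $-(A_2A_1)=-4+2I+2J+K\equiv 2I+2J+K$ and $A_1A_2\equiv 2I+2J+K$ mod $4$, the two images agree modulo $\pm1$.

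So the image is a quotient of $\Pi^{ab}\otimes\Z/2=(\Z/2\Z)^4$, generated by the images of $a_1,a_2,a_4,a_5$.

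\textbf{Surjectivity onto $(\Z/2\Z)^4$.} It remains to show the image has order exactly $16$, i.e. that no nontrivial product $A_1^{\epsilon_1}A_2^{\epsilon_2}A_4^{\epsilon_4}A_5^{\epsilon_5}$ with $\epsilon_i\in\{0,1\}$ is $\equiv\pm1$ mod $4$. This is a finite check of $15$ products modulo $4$. A cleaner route is to reduce modulo $2$ in the Clifford-type coordinates $(a,b,c,d)$: mod $2$, the element $A_1\equiv I$, $A_2\equiv J$, $A_4\equiv I+2\cdots$, and so on. One then tracks the $(b,c,d)$ parity vector together with a finer invariant mod $4$ to separate the $16$ classes.

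This is the main obstacle. Parity mod $2$ gives only a group of order at most $8$, since the image in $(\cH/2\cH)^*$ is small, so the distinction must use information modulo $4$. I would first compute the $16$ products explicitly mod $4$, as the paper does with Python-checked matrix computations, and confirm they are pairwise distinct up to sign.

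With $\Pi/\Pi(2)\simeq(\Z/2\Z)^4$ established, $\Pi(2)$ contains the kernel of $\Pi\to H_1(\Pi,\Z/2)$ and has the same index, so the two coincide, and the conclusions listed in the first paragraph follow.
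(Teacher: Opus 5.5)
Your route is the paper's: show that the image of $\Pi$ in $\Gamma_4/\{\pm1\}$ is elementary abelian, hence a quotient of $H_1(\Pi,\Z/2\Z)\simeq(\Z/2\Z)^4$. Then show it has order $16$, and conclude by the uniqueness remark, Hurwitz ($g=1+16$) and Theorem \ref{systolesSn} ($6\cdot 8=48$).

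For the first half the paper is more economical. Every $h\in\Gamma_4$ is $u+k$ with $u\in\{\pm1,\pm I,\pm J,\pm K\}$ and $k\in 2\cH_4$. Such $k$ are central in $\cH_4$ and satisfy $kk'=0$, so $hh'=\pm h'h$ and $h^2=u^2=\pm1$ on all of $\Gamma_4$. This replaces your pairwise commutator check, of which you carried out only one of six. Exponent $2$ on the generators also follows at once from Cayley--Hamilton: $A_i^2=-4A_i-1\equiv-1 \mod 4$.

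What is missing is the second half, and it is the whole content of the lemma. Index $16$, genus $17$ and the count $48$ all hinge on the image having rank $4$ rather than $3$, and you defer exactly this to an unperformed computation. The claim is true and the check is short.

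\begin{itemize}
\item Reduction mod $2$ gives a homomorphism $\Gamma_4/\{\pm1\}\to\{1,I,J,K\}\simeq(\Z/2\Z)^2$, recording which coordinate is odd. It sends $A_1,A_4\mapsto I$ and $A_2,A_5\mapsto J$, so it is onto on the image of $\Pi$.
\item Its kernel consists of the classes of $\pm1+2(bI+cJ+dK)$, and $(b,c,d)\mod 2$ identifies it with $(\Z/2\Z)^3$.
\item In this kernel, $A_1A_4=-5+4I+10J-6K$ gives $(0,1,1)$ and $A_2A_5=-5-2I+4J-2K$ gives $(1,0,1)$. These are independent.
\end{itemize}

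Hence the image has order $4\cdot 4=16$.

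The paper argues instead from the distinctness of the six classes $\bar a_i$. Be aware that ``$2^r\geq 6$'' alone only yields $r\geq 3$. One must add that, since $a_1a_3a_5=a_2a_4a_6=1$ and the $\bar a_i$ are distinct (hence nonzero), $\{0,\bar a_1,\bar a_3,\bar a_5\}$ and $\{0,\bar a_2,\bar a_4,\bar a_6\}$ are two planes meeting only in $0$. That forces $r\geq 4$.
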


\begin{proof} We observe that $q_2=4$. Recall that 
$$\cH_4=\cH/4\cH \text{ and }\Gamma_4=\{h\in \cH_4^*\mid \det(h)=1 \mod 4\}.$$
By Assertion (a) of Lemma \ref{solq}, $\cH_4^*$ is a 2 group.

We show that $\Gamma_4/\{\pm 1\}$ is commutative. First we observe
that 
$$ak=ka=-ka\, \,\,\text{ for all } a\in \cH_4 \text{ and } k\in 2 \cH_4.$$
Any element in $h\in \Gamma_4$ is of the form $a+k$, where
$a\in \{\pm 1, \pm I, \pm J, \pm K\}$ and $k\in  2 \cH_4$.
Let $h'=a'+k'$ be another element in $\Gamma_4$.
We have 
\begin{align*} aa'&=\pm a'a,\\
ak'&=k'a =-k'a, \\
a'k&=ka'=-ka'\\
kk&=k'k=0
\end{align*}

 It follows that $hh'=\pm h'h$ which 
proves that
$\Gamma_4/\{\pm 1\}$ is commutative.

Similarly, we show that $h^2=\pm 1$ for any $h\in \Gamma_4$.
We deduce that $\Pi/\Pi(4)\simeq (\Z/2\Z)^r$ for some $r$.
Since $a_1,\ldots, a_6$ have distinct images in
$\Gamma_4/\{\pm 1\}$, we deduce that $2^r\geq 6$. Since
$\Pi/(\Pi,\Pi)= H_1(S)\simeq \Z^4$, we finally deduced that
$\Pi/\Pi(4\simeq (\Z/2\Z)^4$, which completes the proof.

By Theorem \ref{systolesSn}, $\pi_2^{-1}(c_i)$ is a union of $8$ systoles of length $2L$. Therefore $\Syst(S(2))$ consists of exactly $48$ systoles. \end{proof}

\bigskip
As it is shown by figure \ref{12gon},  $c_3\cup c_4\cup c_5\cup c_6$ cuts $S$ into
a single $12$-gon. Thus the set of $32$ systoles
$$\{ c\in \Syst(S(2))\mid c\subset
\pi_2^{-1}(c_3\cup c_4\cup c_5\cup c_6)\}$$ 
fills $S(2)$.  In \cite{Mathieu} it was hinted that this set contains a filling subset with only 25 systoles. 

However An, Irhinger and Irmer obtained a better result:

\begin{AII}\cite{AII}\label{AIIthm} The set $\Syst(S(2))$ contains a filling subset $A$ of $24$ elements.
\end{AII}

\subsection{Hexagonal regular Penner systems}

Let $X$ be a hyperbolic surface, and let $C$ be a set of simple geodesics. Following \cite{Penner},
the set $C$ is called a {\it Penner system} if 
 
\begin{enumerate}
\item[(a)] $C$ fills $\Sigma$, 
\item[(b)] $C$ is partitionned into $A=B\cup R$, where geodesics in $B$ are called {\it blue}
and geodesics in $R$ are called {\it red}, and 
\item[(c)] geodesics of the same colour do not intersect.
\end{enumerate}

The Penner system $C$ is called {\it hexagonal} if it cuts $X$ into regular right-angled
hexagons. The following theorem has been proved in \cite{Mathieu}

\begin{thm}\label{Mat} Let $X$ be a hyperbolic surface. Assume that $\Syst(X)$ is a hexagonal 
Penner system.

Then for any filling subset $B\subset \Syst(X)$, there is a hyperbolic surface
$\Sigma$ near $X$ in $\cT_g$ such that $\Syst(\Sigma)=B$.
\end{thm}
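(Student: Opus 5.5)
The plan is to deform $X$ within a family of hyperbolic surfaces that are still tiled by right-angled hexagons along the same combinatorial Penner pattern. Along this family the curves of $B$ should keep a common length, while the curves of $D:=\Syst(X)\setminus B$ become strictly longer. The rest of the length spectrum is handled by discreteness. Concretely, since the length spectrum of $X$ is discrete (\cite{Buser}), there is $\eta>0$ such that every closed geodesic of $X$ not in $\Syst(X)$ has length $\geq \ell_0+\eta$, where $\ell_0$ is the systole length. Length functions are continuous on $\cT_g$, and for a nearby surface only finitely many curves can have length below $\ell_0+\eta/2$. Hence for $\Sigma$ close enough to $X$, the systoles of $\Sigma$ lie in $\Syst(X)$. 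It therefore suffices to construct $\Sigma$ arbitrarily near $X$ with $\ell_b(\Sigma)=\ell$ for all $b\in B$ and $\ell_d(\Sigma)>\ell$ for all $d\in D$.

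For the construction, I would use the combinatorics of the hexagonal Penner system. Every vertex of the tiling is a transverse crossing of a blue and a red geodesic, so the sides of each hexagon alternate in colour. Each geodesic $c\in\Syst(X)$ is a concatenation of hexagon sides, and $\ell_c$ is the sum of their lengths. A right-angled hexagon is determined by three alternating side lengths $x,y,z$. The other three are given by the formula
$$\cosh x'=\frac{\cosh y\cosh z+\cosh x}{\sinh y\sinh z}$$
and its cyclic versions. I would deform each hexagon to a (possibly non-regular) right-angled hexagon and reglue along the same pattern. Right angles are preserved, so the result is a smooth hyperbolic surface on which every $c\in\Syst(X)$ is still a closed geodesic. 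The only constraint is that the two hexagons sharing a side give that side the same length. This yields a finite-dimensional deformation space containing $X$, with explicit coordinates.

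The main step is then a first-order computation at the regular hexagon, where $\cosh L_0=2$ with $L_0$ the side length of $H$. There I would linearize the hexagon formula: a variation $(\delta x,\delta y,\delta z)$ of the three blue sides induces, on the opposite red side, $\delta x'=-\lambda\,\delta x-\mu(\delta y+\delta z)$ for explicit constants $\lambda,\mu>0$. This is what makes shrinking blue sides lengthen red ones. I would look for a side-variation vector, subject to the gluing constraints, such that the total variation along each $b\in B$ equals a common value $-1$ and the total variation along each $d\in D$ is $>-1$. The first thing to try is a uniform rule. Shrink every side lying on a curve of $B$ by the same $\delta$ in each hexagon, and let the sides lying on $D$ be whatever the hexagon relations force. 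Then check, using that $B$ fills (so every hexagon has sides on $B$), that each $d\in D$ gains length relative to $B$. The common value along $B$ should follow from the symmetry: every curve of $B$ is made of the same number of sides, since all systoles have equal length and all sides have length $L_0$.

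The main obstacle I expect is this linear-algebra step. Each hexagon imposes three relations among its six side variations, and a side on $D$ may be flanked by sides on $B$ of both colours. So one must verify that a consistent solution exists in which the $D$-curves strictly increase relative to the $B$-curves. If the uniform rule fails, the fallback is a Gordan/Farkas alternative. Either the desired strict inequalities have a solution, or a nonnegative combination of the $D$-length differentials, not all zero, equals a combination of the $B$-length differentials on the deformation space. The plan would be to exclude the latter using the filling property of $B$ together with the signs of $\lambda,\mu$.
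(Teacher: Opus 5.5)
\textbf{Verdict: genuine gap.} Your reduction through discreteness of the length spectrum is fine: it does suffice to find $\Sigma$ near $X$ with $\ell_b(\Sigma)=\ell$ on $B$ and $\ell_d(\Sigma)>\ell$ on $D$. The paper gives no argument for this statement; it only cites \cite{Mathieu}. The gap is your main step. The family of surfaces that keep the right-angled hexagonal tiling is too rigid, and the Farkas alternative you hope to exclude actually occurs. Take $X=S$, with systoles $c_1,\dots,c_6$ (odd blue, even red), and $B=\{c_3,c_4,c_5,c_6\}$. This $B$ fills, since it cuts $S$ into a $12$-gon. The type-$i$ side of the tile $wH$ is shared with $wr_iH$, so modulo $\Pi$ the four tiles are paired as $\{H,r_1H\}$, $\{r_2H,r_1r_2H\}$ along odd sides, and as $\{H,r_2H\}$, $\{r_1H,r_1r_2H\}$ along even sides. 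A right-angled hexagon is determined by its three odd sides, and also by its three even sides. Hence your matching conditions force the four deformed tiles to be congruent. Near $S$, your family is therefore the $3$-parameter family with $\ell(c_i)=2x_i$, where $x_1,\dots,x_6$ are the sides of a single right-angled hexagon. Now $\ell(c_3)=\ell(c_5)$ forces $x_3=x_5$. The hexagon then has a reflection symmetry exchanging its sides $2$ and $6$, so $\ell(c_2)=\ell(c_6)$. Whenever the curves of $B$ are systoles, so is $c_2\in D$, and no member of your family has $\Syst=B$.

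\textbf{First-order methods cannot work.} This failure is not an artifact of your family. At a right-angled configuration, Wolpert's formula $\partial_{\tau_\gamma}\ell_c=\sum_{p\in\gamma\cap c}\cos\theta_p$ puts every twist along a curve of $\Syst(X)$ in the kernel of all the $d\ell_c$. For $S$, the twists along $c_1,c_3,c_5$ together with your family span $T_S\cT_2$. Linearizing $\cosh x_4=(\cosh x_3\cosh x_5+\cosh x_1)/(\sinh x_3\sinh x_5)$ at $\cosh x_i=2$ gives $dx_4=\frac13dx_1-\frac23(dx_3+dx_5)$. So the opposite side enters with the same sign, and your $\lambda$ is negative. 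From this one gets $d\ell_{c_2}+d\ell_{c_3}=d\ell_{c_5}+d\ell_{c_6}$ on all of $T_S\cT_2$. That is exactly the bad alternative of your Gordan/Farkas step, with coefficient $1$ on $c_2\in D$. So no first-order argument can succeed, and the filling property together with sign considerations cannot rule it out. Your uniform rule is also overdetermined here: each tile has four sides on $B$, and prescribing $\delta x_3=\delta x_4=\delta x_5=-\delta$ forces $\delta x_1=-7\delta$ and $\delta x_6=5\delta$.

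\textbf{What is missing.} You need a genuinely second-order mechanism that breaks the right angles. Twists along systoles are neutral to first order but strictly lengthen the curves they cross at second order. They have to be combined with an order-$\tau^2$ correction of the remaining coordinates, obtained from the implicit function theorem. In the example, one can check in Fenchel--Nielsen coordinates for $\{c_1,c_3,c_5\}$ that twisting along $c_1$ and $c_3$ by $2\tau$ and $\pm\tau$ (with a suitable sign), then readjusting $\ell(c_1)$ and $\ell(c_3)=\ell(c_5)$ at order $\tau^2$, produces $\Syst(\Sigma)=B$.
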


\subsection{Proof of the theorem}

The main result is an obvious consequence of the following
result.

\begin{thm} For any $n\geq 1$, there is a hyperbolic surface 
$\Sigma$ of genus $g=g_{S(2n)}$ which is filled by its systoles such that

$$\Card(\Syst(\Sigma))<9\ln(2+\sqrt{3})\frac{g}{\ln\,g}.$$
\end{thm}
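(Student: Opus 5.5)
The plan is to combine the covering $\theta_{2n}: S(2n)\to S(2)$ with the An--Ihringer--Irmer Theorem \ref{AIIthm} and Theorem \ref{Mat}. The argument runs through four steps.

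\textbf{Step 1: the covering.} Since $q_2=4$ divides $q_{2n}$, we have $\Z\oplus q_{2n}\cH\subset \Z\oplus 4\cH$. To see the divisibility, note that $q_{2n}=2p_nq_n$ and $q_2=4$. One way is to check that $q_m$ is even whenever $m$ is even, and that $4\mid q_{2n}$ because $p_nq_n$ is even. The cleanest route is the identity $(2+\sqrt3)^{2n}=(7+4\sqrt3)^n$, which shows $q_{2n}\equiv 0 \mod 4$ directly. Hence $\Pi(2n)\subset \Pi(2)$, and there is a Galois covering $\theta: S(2n)\to S(2)$ with $\pi_{2n}=\pi_2\circ\theta$.

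\textbf{Step 2: the set $B$.} Let $A\subset\Syst(S(2))$ be the filling set of $24$ systoles given by Theorem \ref{AIIthm}. Put $B:=\{c\in\Syst(S(2n))\mid \theta(c)\in A\}$. By Theorem \ref{systolesSn}, every systole of $S(2n)$ is a component of some $\pi_{2n}^{-1}(c_i)$, so it maps onto a component of $\pi_2^{-1}(c_i)$, which is a systole of $S(2)$. Conversely, $\theta^{-1}(c)$ is a union of systoles for every $c\in \Syst(S(2))$. Moreover the Galois group of $\theta$ acts transitively on the components over each $c$, and each such component has length $2nL$, covering $c$ with degree $n$. Therefore every $c\in\Syst(S(2))$ has the same number $[\Pi(2):\Pi(2n)]/n$ of preimages, and
$$\Card B=\frac{24}{48}\Card\Syst(S(2n))=\frac12\Card\Syst(S(2n)).$$
The set $B$ fills $S(2n)$: its union is $\theta^{-1}(\bigcup A)$, and the preimage of a family of polygons under a finite covering is again a family of polygons, since discs lift to discs.

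\textbf{Step 3: applying Theorem \ref{Mat}.} This step needs $\Syst(S(2n))$ to be a hexagonal Penner system, and I expect it to be the main point requiring care. The geodesics $c_1,\dots,c_6$ are the sides of the tessellation of $S$ by four right-angled hexagons. Colour $c_1,c_3,c_5$ blue and $c_2,c_4,c_6$ red. Two geodesics of the same colour are images of sides $S_i,S_{i+2}$, which are disjoint in $H$. Within each colour the three geodesics are pairwise disjoint, because all their intersection points are vertices of the tessellation, and every vertex joins sides of consecutive indices. Lifting to $S(2n)$, Theorem \ref{systolesSn} identifies $\Syst(S(2n))$ with the full preimage of $c_1\cup\dots\cup c_6$. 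This preimage is the one-skeleton of the lifted tessellation by regular right-angled hexagons, and it inherits the colouring. Lifts of the same $c_i$ are disjoint, and lifts of same-coloured distinct $c_i$ are disjoint as well. So $\Syst(S(2n))$ is a hexagonal Penner system, and Theorem \ref{Mat} gives a surface $\Sigma$ near $S(2n)$ with $\Syst(\Sigma)=B$. This $\Sigma$ is filled by its systoles.

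\textbf{Step 4: the estimate.} Applying Corollary \ref{estimate} with $2n$ in place of $n$ gives
$$\Card\Syst(\Sigma)=\frac12\Card\Syst(S(2n))<9\ln(\alpha)\frac{g}{\ln g},$$
with $g=g_{S(2n)}$. The inequality is strict because Corollary \ref{estimate}'s proof yields a strict bound.
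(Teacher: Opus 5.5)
Your proposal is correct and follows essentially the same route as the paper: the divisibility $4\mid q_{2n}$ gives the covering $\theta\colon S(2n)\to S(2)$, you pull back the $24$-element filling set of Theorem \ref{AIIthm}, colour lifts of $c_1,c_3,c_5$ blue and lifts of $c_2,c_4,c_6$ red to apply Theorem \ref{Mat}, and conclude with Corollary \ref{estimate} at $2n$. The differences are cosmetic: you prove $4\mid q_{2n}$ via $(7+4\sqrt3)^n$ instead of the recurrence $q_{n+2}=4q_{n+1}-q_n$, and you spell out details the paper leaves implicit, namely the equal number $[\Pi(2):\Pi(2n)]/n$ of lifts over each systole of $S(2)$, the filling property of $B$ via lifting of discs, and the disjointness within each colour because vertices only join consecutive indices.
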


\begin{proof} Since
$$q_{n+2}=4 q_{n+1}-q_n,$$
it follows  that $q_{2n}$ is divisible by $q_2=4$. Hence 
$\Pi(2n)\subset \Pi(2)$ and there is a Galois covering

$$\theta_n: S(2n)\to S(2)$$

\noindent with Galois group $\Pi(2)/\Pi(2n)$. By Theorem \ref{AIIthm},
there is a filling subset $A\subset \Syst(S(2))$ of cardinality
$\frac{1}{2} \Card \,\Syst(S(2))$. Set

$$B=\{c\in\Syst(S(2n))\mid \theta_n(c)\in A\}.$$

The set $\Syst(S(2n))$ is a Penner system, where a systole is blue if it lies  inside $\pi_n^{-1}(c_1\cup c_3\cup c_5)$ and  is red otherwise.
By Theorem \ref{Mat}, there is a hyperbolic surface near $S(2n))$ with systole set
$B$. It is clear that $B$ fills $\Sigma$, and its cardinality is
$\frac{1}{2} \Card \,\Syst(S(2n))$. Therefore by Corollary \ref{estimate}, we obtain

$$\Card(\Syst(\Sigma)<9\ln(2+\sqrt{3})\frac{g}{\ln\,g},$$

\noindent which completes the proof.
\end{proof}

%\bibliography{QuaternionBib}

\end{document}